\documentclass{article}
\usepackage{amsmath}
\usepackage{amscd}
\usepackage{amsthm}
\usepackage{amssymb}
\usepackage{romansections}
\renewcommand{\Bbb}{\mathbb}  
\renewcommand{\bar}{\overline}

\newcommand{\Q}{{\Bbb{Q}}}  
\newcommand{\R}{{\Bbb{R}}}  
\newcommand{\C}{{\Bbb{C}}}  
\newcommand{\Z}{{\Bbb{Z}}}  
\newcommand{\Pe}{{\Bbb{P}}} 
\newcommand{\A}{{\Bbb{A}}}  
\newcommand{\E}{{\Bbb{E}}}  
\newcommand{\G}{{\Bbb{G}}}  
\newcommand{\gm}[1]{\G_{m,#1}}      
\newcommand{\U}{\Bbb{U}}    
\newcommand{\Ve}{\Bbb{V}}
\renewcommand{\k}{\mathbf{K}}  
\newcommand{\shF}{{\cal F}} 
\newcommand{\shG}{{\cal G}} 
\newcommand{\shV}{{\cal V}}

\newcommand{\HH}{\underline{H}}    
\newcommand{\tensor}{\otimes}   
\newcommand{\isom}{\cong}       
\newcommand{\Hom}{\operatorname{Hom}}
\newcommand{\Ext}{\operatorname{Ext}}
\newcommand{\prolim}{\varprojlim}
\newcommand{\rtors}{\operatorname{tors}} 



\newtheorem{lemma}{Lemma}[subsection]

\newtheorem{prop}[lemma]{Proposition}
\newtheorem{theorem}[lemma]{Theorem}
\newtheorem{defn}[lemma]{Definition}

\newtheorem{conj}[lemma]{Conjecture}

\newcommand{\examples}{\noindent{\bf Examples:\ }}


\newcommand{\spec}{\operatorname{Spec}}
\newcommand{\rel}{\text{ rel }}        
\newcommand{\sgn}{{\text{sgn}}}        
\newcommand{\hm}{H_{\cal M}}           
\newcommand{\hgen}{H_{\operatorname{gen}}}    
\newcommand{\habs}{H_{\abs}}
\newcommand{\res}{\operatorname{res}}  
\newcommand{\cosq}{\operatorname{cosk}} 
\newcommand{\cone}{\operatorname{Cone}} 
\newcommand{\hodge}{{\cal H}}

\newcommand{\V}{\Bbb{V}} 

\newcommand{\Gr}{\operatorname{Gr}} \newcommand{\Log}{\operatorname{{\cal L\it og}}}
 
\newcommand{\abs}{\operatorname{abs}} 
 
 \newcommand{\pol}{\operatorname{pol}}
 \newcommand{\realH}{\operatorname{real}_{\Hh}}

 \newcommand{\Sym}{\operatorname{Sym}}

\newcommand{\Hh}{{\cal H}}  
\newcommand{\Lh}{{\cal L}} 
\newcommand{\Mh}{{\cal M}}  
\newcommand{\Oh}{{\cal O}}

\newcommand{\silo}{\stackrel{\sim}{\longrightarrow}}
\newcommand{\pfeil}{\longrightarrow}

\newcommand{\verk}{{\scriptstyle \circ}} \newcommand{\MHM}{\operatorname{\rm
MHM}} 
  
\newcommand{\MHS}{\operatorname{MHS}}

 \newcommand{\ea}{\mathfrak{a}}
\newcommand{\loc}{\operatorname{loc}}
 
 \newcommand{\Var}{\operatorname{Var}}
\newcommand{\For}{\operatorname{For}} \newcommand{\pr}{\operatorname{pr}}

\newcommand{\Gen}{{\cal G}en}

\newcommand{\Li}{\operatorname{Li}}

\newcommand{\mup}{\mu_d^0}

\begin{document}
\vspace*{3cm}
\begin{center}
{\LARGE \bf The Classical Polylogarithm}\\[0.5cm]
{\large Abstract of a series of lectures\\[0.1cm]
given at the workshop on polylogs in Essen,\\[0.1cm]
May 1 -- 4, 1997\\[0.3cm]
Annette Huber and J\"org Wildeshaus}
\end{center}
\vspace{5cm}
Annette Huber\hfill huber at math.uni-muenster.de\\
J\"org Wildeshaus\hfill wildesh at math.uni-muenster.de\\
Math. Institut,
Einsteinstr. 62,
48 149 M\"unster,
Germany\\

\begin{center}{\it address update 2012:}\end{center}

\noindent Annette Huber \hfill annette.huber at math.uni-freiburg.de\\
Math. Institut, Universit\"at Freiburg, Eckerstr. 1, Germany\\

\noindent J\"org Wildeshaus\hfill wildesh at math.univ-paris13.fr\\
D\'epartement de Math\'ematiques, Institut Galil\'ee, Universit\'e Paris 13, \\
99, avenue Jean-Baptiste Cl\'ement, 93430 - Villetaneuse, France

\newpage
\tableofcontents
\newpage
\noindent The purpose of this series of lectures was to give an overview of the
central ideas in the proof of the following\\

\noindent{\bf Main Theorem.} {\it Let $d \ge 2$, $j \ge 2$, $\mup$ the set of primitive
$d$-th roots of unity in $F:= \Q (\mu_d)$. There is a (unique) map of sets
\[
\epsilon_{j}: \mup 
\longrightarrow K_{2j-1}(F)_\Q := K_{2j-1}(F) \otimes_{\Z} \Q 
\]
whose composition with the regulator to Deligne, or absolute Hodge cohomology
\[
r_{\Hh}: K_{2j-1}(F)_\Q \longrightarrow 
\left( \bigoplus_{\sigma: F \hookrightarrow \C} 
\C / (2 \pi i)^j \Q \right)^+
\]
sends $\omega \in \mup$ to the element 
\[
\left(- Li_j(\sigma \omega) \right)_{\sigma} = \left(- \sum_{k \ge 1} \frac{\sigma \omega^k}{k^j} \right)_{\sigma} \; .
\]
Here, $+$ denotes the invariant part under the joint operation of complex
conjugation on the set of embeddings of $F$ into $\C$, and on 
$\C/(2\pi i)^j\Q$. }\\

The result is due to Beilinson (\cite{B1}, 7.1.5). The original proof, somewhat
sketchy, is beautifully reviewed in \cite{Neu}. It relies heavily on a result 
on the explicit shape of a construction called the ``Loday symbol'' in Deligne
cohomology. This so-called ``Crucial Lemma'' (\cite{Neu}, II.2.4) was subsequently proved in \cite{E}, 3.9.\\

The proof given in the course of this series is different from the original
one, and makes use of the classical, or cyclotomic {\it polylogarithm}. One
of the great advantages of this approach is that the complicated calculations
in Deligne cohomology are no longer necessary. In fact, the polylog enjoys a
characteristic property called {\it rigidity}. One of the aims of the lectures
was to emphasize the r\^ole of rigidity played in the explicit representation
of the objects.\\

Let us remark that the Main Theorem admits an $l$-adic counterpart: \cite{HW},
Corollary 9.7. The statement was conjectured by Bloch and Kato (\cite{BK},
Conjecture 6.2), and here the only complete proof is via polylogarithms.
That the talks concentrate on the Hodge theoretic aspects of the theory has to
do with the speakers' desire to make the objects as ``visible'' as possible --
to their taste, this requirement is satisfied to a larger degree of satisfaction
by the objects of Hodge theory rather than those of the \'etale world.
The main strategy of proof, and the abstract concepts however admit immediate
translations to the $l$-adic setting.\\

The main ideas of the proof of the Main Theorem, and its $l$-adic counterpart,
appear already in the preprint \cite{B2}. Since then, quite a lot of
polylogarithmic literature has been published. The speakers like to think of
their talks, and in fact, of this abstract, as a guide through the literature.
We hope that it will be useful particularly to those who are new to the field.\\

We would like to thank the organizers of the workshop, G.\ Frey, H.\ Gangl,
and H.-G.\ R\"uck, for the invitation to Essen. To us, it meant a great
opportunity to give a self-contained exposition of some central aspects
of the theory.

\section{Motivation}
In this talk, it was tried to indicate the main strategy of proof:
Consider, for a number field $F$, the regulator
\begin{equation}\tag{*}
r_{\Hh}: K_{2j-1}(F)_\Q \longrightarrow 
\left( \bigoplus_{\sigma: F \hookrightarrow \C} 
\C / (2 \pi i)^j \Q \right)^+\; . 
\end{equation}
The zeroeth step is to introduce the concept of ``Yoneda extensions in
categories of mixed sheaves'' in order to reinterpret both sides of $(*)$.
In lecture II, we shall define, for a smooth and separated $\R$-scheme $X$,
a $\Q$-linear tensor category
\[
\Var(X/\R)
\]
of {\it variations of mixed $\Q$-Hodge structure over $\R$ on $X$}.\\

The reinterpretation of the right hand side of $(*)$ acquires the following
shape:\\

\noindent{\bf Proposition.} {\it Let $j \ge 1$. There is a canonical isomorphism
\[ 
\Ext^1_{\Var \left( F \otimes_{\Q} \R / \R \right)} (\Q(0),\Q(j)) \silo
\left( \bigoplus_{\sigma:F\hookrightarrow\C}\C/(2\pi i)^j\Q \right)^+ \]
where $+$ denotes the invariant part under the joint operation of complex
conjugation on $\{ F \hookrightarrow \C \}$, and on $\C/(2\pi i)^j\Q$. }\\

According to the motivic folklore, there should be a $\Q$-linear tensor
category of {\it smooth mixed motivic sheaves} $\Mh\Mh^s (X)$ on any smooth and
separated scheme $X$ over $\Q$, together with an exact tensor functor, called the
{\it Hodge realization}
\[
\realH: \Mh\Mh^s (X) \longrightarrow 
\Var \left( X \times_{\Q} \R / \R \right) \; .
\]
There should be an isomorphism
\[
\Ext^1_{\Mh\Mh(\spec F)} (\Q(0),\Q(j)) \silo
K_{2j-1}(F)_\Q
\]
for $j \ge 1$ identifying the regulator $r_{\Hh}$ with the morphism induced by
$\realH$. This would give the sheaf theoretical reinterpretation of the left
hand side of $(*)$, and in fact, of $r_{\Hh}$.\\

In first approximation, the proof of the Main Theorem, and indeed, also of its
$l$-adic counterpart, proceeds in two steps, corresponding to lectures II--V,
and VI--VII respectively:\\

\noindent 1. Construct
\[
r_{\Hh} \verk \epsilon_j: \mup \longrightarrow
\Ext^1_{\Var \left( F \otimes_{\Q} \R / \R \right)} (\Q(0),\Q(j))
\]
first.
\begin{enumerate}
\item[a)]
The construction is a priori sheaf theoretical, and uses concepts like
Leray spectral sequences. The objects will be characterized by certain
universal properties, one consequence of which will be the earlier mentioned
rigidity.
\item[b)]
Via rigidity, it is possible to describe explicitly the objects defined by
abstract nonsense. In particular, we get the formula of the Main Theorem for
$r_{\Hh} \verk \epsilon_j (\omega)$, $\omega \in \mup$.
\item[c)]
Again via rigidity, it is possible to show that the abstract construction of
a) is ``geometrically motivated'': the one-extensions $r_{\Hh} \verk \epsilon_j (\omega)$ occur as cohomology objects, with Tate coefficients, of certain
$F$-schemes.
\end{enumerate}
2. Because of the present non-availability of a sheaf theoretical machinery
on the level of motives, step 1.a) cannot simply be imitated. However, it
turns out that 1.c) admits a translation to $K$-theory, yielding the map
\[
\epsilon_{j}: \mup 
\longrightarrow K_{2j-1}(F)_\Q \; .
\]
Its compatibility with the map $r_{\Hh} \verk \epsilon_j$ under the regulator
is then a consequence of the definition.\\

Let us give a more detailed account of step 1. Again, we owe to Beilinson the
insight that instead of treating the $r_{\Hh} \verk \epsilon_j (\omega)$,
$\omega \in \mup$, $d,j \ge 2$ separately, one should construct one object
containing all the information.\\

In lecture III, we are going to define, by some universal property,
the {\it logarithmic (pro-)variation} $\Log$ on $\G_{m,\R}$. We have
\[
\Gr_*^W\Log=\prod_{j\geq 0}\Q(j) \; ,
\]
i.e., $\Log$ is a successive extension of $\Q(0)$ by $\Q(1)$ by $\Q(2)$...
The {\it polylogarithmic extension} $\pol$ is a one-extension, in the category
of variations on the $\R$-scheme $\U_\R:=\Pe^1_\R \setminus\{0,1,\infty\}$, of $\Q(0)$ by the
restriction of $\Log$:
\[
\pol\in\Ext^1_{\Var(\U_\R /\R)} \left( \Q(0),\Log_{\U_\R} \right) \; .
\]
Again, the definition is via a universal property. From it, we deduce rigidity,
and are consequently able, in talk IV, to give an explicit description of $\pol$, essentially in terms of (the inverse of) its period matrix. This description will then justify the name ``polylogarithm'' as the entries of
this matrix are essentially given by the higher logarithms.\\

In lecture V, we establish another characteristic feature of our objects: the
so-called {\it splitting principle}. Any $\omega \in \mup$ induces an embedding
\[
i_{\omega}: \spec  \Q(\mu_d)  \hookrightarrow \U \; .
\]
\noindent{\bf Theorem.}{\it 
$i_{\omega}^* \Log$ is canonically split:}
\[
i_{\omega}^* \Log = \prod_{j\geq 0}\Q(j) \; .
\]

Therefore, we may think of $i_{\omega}^* \pol$ as an element of
\[
\prod_{j\geq 1} \Ext^1_{\Var( \Q(\mu_d) \otimes_{\Q} \R /\R )}(\Q(0),\Q(j))
= \prod_{j\geq 1} \left( \bigoplus_{\sigma} 
\C / (2 \pi i)^j \Q \right)^+ \; .
\]
From the explicit description of $\pol$, it is straightforward to see that the
$j$-th component of $i_{\omega}^* \pol$ equals, up to scaling, the element
\[
\left(- Li_j(\sigma \omega) \right)_{\sigma} \; .
\]
In this very precise sense, all the $r_{\Hh} \verk \epsilon_j (\omega)$,
$\omega \in \mup$ are interpolated by $\pol$. The additional data is the
action of the fundamental group of $\U (\C)$ given by the local system
underlying the variation $\pol$. In fact, rigidity is formulated in terms of
this local system. \\

In talk VI, we give a sketch of some of the technical ingredients for a 
suitable formalism of ``relative $K$-theory''. Lecture VII establishes the
geometric realization of $\Log$, and of $\pol$ in absolute and motivic
cohomology.

\section{Review of Hodge-theory}
We assemble some facts from Hodge theory that are needed in the construction
of the polylogarithm. Basically everything (including references) is
contained in \cite{BZ}.

\subsection{Mixed Hodge structures}
We are mostly interested in Hodge structures of Tate type, i.e., ones where
only  Hodge numbers $(n,n)$ for $n\in\Z$ occur. 
\begin{lemma} \label{II.1.1} There is a natural isomorphism for $n> 0$
\[\Ext^1_{\MHS}(\Q(0),\Q(n))\isom \C/(2\pi i)^n\Q \]
which assigns to $s\in\C$ the extension class of $E_s$ where
$E_{s\C}=\C^2$ and $E_{s\Q}\subset \C^2$ is given by 
\[\left(\begin{array}{cc}1&0\\ -\frac{s}{(2\pi i)^n}&1\end{array}\right)
\left(\begin{array}{c}\Q\\(2\pi i)^n\Q\end{array}\right).\]
with weight and Hodge filtration in $\C^2$ given by
\[
W_i=\begin{cases}\left(\begin{array}{c}0\\ 0\end{array}\right)&i<-2n,\\[3ex]
              \left(\begin{array}{c}0\\ *\end{array}\right)&-2n\leq i<0,\\[3ex]
               \left(\begin{array}{c}*\\ *\end{array}\right)&0\leq i,
     \end{cases}\hspace{5ex}
F^p=\begin{cases}\left(\begin{array}{c}*\\ *\end{array}\right)&p\leq -n,\\[3ex]
              \left(\begin{array}{c}*\\ 0\end{array}\right)&-n<p\leq 0,\\[3ex]
               \left(\begin{array}{c}0\\ 0\end{array}\right)&0< p.
     \end{cases}
\]
\end{lemma}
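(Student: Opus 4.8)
The plan is to deduce the statement from the general description of $\Ext^1$ in the category of mixed $\Q$-Hodge structures (the ``Carlson formula''), and then to make this explicit for $H=\Q(n)$ and for the specific extension $E_s$.

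First I would prove: for every mixed $\Q$-Hodge structure $H$ all of whose weights are negative, there is a natural isomorphism
\[
\Ext^1_{\MHS}(\Q(0),H)\ \silo\ H_\C/\bigl(F^0H_\C+H_\Q\bigr).
\]
Given a class represented by $0\to H\to E\to\Q(0)\to 0$, pick a $\Q$-linear lift $e\in E_\Q$ of $1\in\Q(0)_\Q$, so $E_\Q=H_\Q\oplus\Q e$ (possible since $\Ext^1$ of $\Q$-vector spaces vanishes). Since $\Q(0)$ is pure of weight $0$ and $H$ has strictly negative weights, the weight filtration of $E$ is forced: $W_kE=W_kH$ for $k<0$ and $W_kE=E$ for $k\ge0$. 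By strictness of the Hodge filtration in $\MHS$, the sequence $0\to F^pH_\C\to F^pE_\C\to F^p\Q(0)_\C\to0$ is exact for every $p$; as $F^p\Q(0)_\C=\Q(0)_\C$ for $p\le0$ and $0$ otherwise, $F^{\bullet}E_\C$ is reconstructed from the single subspace $F^0E_\C=\C v\oplus F^0H_\C$, where $v\in F^0E_\C$ is any vector lifting $1\in\Q(0)_\C$. Writing $v=e+h$ with $h\in H_\C$, one attaches to the extension the class of $h$ in $H_\C/(F^0H_\C+H_\Q)$: changing $v$ moves $h$ inside $F^0H_\C$, changing $e$ moves it inside $H_\Q$. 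Conversely, each $h\in H_\C$ produces such an $E$; the only point to check is that it is a mixed Hodge structure, i.e. that $\Gr^WE\isom\Gr^WH\oplus\Q(0)$ is pure in each weight, which it is. One then verifies that two vectors yield isomorphic extensions exactly when their classes agree, and that the whole construction is functorial in $H$ — whence the asserted isomorphism and its naturality.

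Then I would specialise to $H=\Q(n)$, $n>0$. Here $\Q(n)$ is pure of weight $-2n<0$ and of Hodge type $(-n,-n)$, so $F^0\Q(n)_\C=0$, while $\Q(n)_\C=\C$ and $\Q(n)_\Q=(2\pi i)^n\Q$; the formula above collapses to $\Ext^1_{\MHS}(\Q(0),\Q(n))\isom\C/(2\pi i)^n\Q$. It remains to recognise that the $E_s$ of the statement realises the class $s$. Feeding it into the recipe: by the displayed weight and Hodge data $F^0E_{s\C}=\C\cdot(1,0)$, so the Hodge lift of $1$ is $v=(1,0)$, whereas the rational lift is the first column $(1,-s/(2\pi i)^n)$ of the given matrix; their difference lies in $\Q(n)_\C$, and comparing it with the rational lattice $(2\pi i)^n\Q$ of $\Q(n)$ identifies the class of $E_s$ with $s\bmod(2\pi i)^n\Q$. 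Checking that the $W$ and $F^{\bullet}$ exhibited in the statement are exactly the ones forced above, and that this $E_s$ is indeed an object of $\MHS$, is then routine bookkeeping.

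The step I expect to be the real work is the first one — establishing the Carlson formula, and in particular showing that the Hodge-theoretic datum of an extension of $\Q(0)$ by $H$ really does reduce to the single class in $H_\C/(F^0H_\C+H_\Q)$: that every such class is attained (surjectivity, which rests on the purity check for $\Gr^W$) and that it is a complete invariant of the extension (injectivity, i.e. that morphisms of extensions amount exactly to altering the $\Q$-lift of $1$). Once this is in place, the remainder is the explicit period computation for $E_s$ together with a matching of normalisations.
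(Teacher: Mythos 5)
Your strategy is the standard one for this statement, and almost certainly the one underlying the paper's citation (the text gives no proof, only the reference to Jannsen, Lemma 9.2 and Remark 9.3.a). Establishing Carlson's formula
\[
\Ext^1_{\MHS}(\Q(0),H)\;\isom\;H_\C/\bigl(F^0H_\C+H_\Q\bigr)
\]
for $H$ of negative weights, and then specialising to $H=\Q(n)$, is exactly the right route, and your sketch of the Carlson argument (rational lift $e$, Hodge lift $v$, class $v-e$, forced weight filtration, strictness giving the exactness of $F^p$) is sound.

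The step you dismiss as ``routine bookkeeping'' is where the argument actually has a gap. Carrying out the period computation for the $E_s$ of the statement: $F^0E_{s\C}=\C\cdot(1,0)$, so $v=(1,0)$, and the rational lift is $e=(1,-s/(2\pi i)^n)$, whence $v-e=(0,\,s/(2\pi i)^n)$. Identifying $\Q(n)_\C$ with the second coordinate and $\Q(n)_\Q$ with $(2\pi i)^n\Q$ there, the Carlson class is $s/(2\pi i)^n \bmod (2\pi i)^n\Q$, \emph{not} $s \bmod (2\pi i)^n\Q$ as you assert. In particular, with the displayed matrix the map $s\mapsto[E_s]$ has kernel $(2\pi i)^{2n}\Q$ rather than $(2\pi i)^n\Q$, so it does not descend to a bijection $\C/(2\pi i)^n\Q\to\Ext^1$. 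You can see the mismatch concretely in the Kummer case $n=1$: the paper's Example~1 wants $z\mapsto E_{\log z}$ to be well-defined, which forces the $2\pi i\Z$-ambiguity of $\log z$ to be absorbed by $\Q(1)_\Q=2\pi i\Q$; that happens if the $(2,1)$-entry of the period matrix is $-\log z$, but not if it is $-\log z/(2\pi i)$. So either the displayed matrix in the statement carries an extraneous factor $(2\pi i)^{-n}$ (and the correct entry is $-s$), or the identification of $\Q(n)_\C$ with $\C$ is meant to be renormalised by $(2\pi i)^n$; in either case you should make the reconciliation explicit rather than assert that the class equals $s$. Everything else in the proposal is fine and would compile into a complete proof once this normalisation is pinned down.
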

\begin{proof}E.g.\cite{J} Lemma 9.2 and Remark 9.3.a).\end{proof}
\examples\begin{enumerate}
\item
In particular, we have 
\[ \Ext^1_{\MHS}(\Q(0),\Q(1))\isom \C/2\pi i\Q\xrightarrow{exp} \C^*\tensor\Q.
\]
We reinterpret this relation by saying that we assign to each $z\in\G_m(\C)$
a mixed Hodge structure called $\Log^{(1)}(z)$, namely $E_{\log(z)}$ in the
notation of the lemma. 
\item For $z\in\C^*$ we have a long exact sequence in $\MHS$:
\[0\to\HH^0(\G_m(\C))\xrightarrow{\Delta}{}\HH^1(\G_m(\C)\rel \{1\}\amalg\{z\})
\to \HH^1(\G_m(\C))\to 0. \]
($\HH$ denotes the corresponding singular cohomology as mixed
Hodge structure.) Put $\shG^{(1)}(z)=\HH^1(\G_m(\C)\rel \{1\}\amalg\{z\})$. This
is again an element in $\Ext^1_{\MHS}(\Q(0),\Q(1))$. We will see in  lecture III that
$\shG^{(1)}(z)=\Log^{(1)}(z)$.
\end{enumerate}

\subsection{Variations}
Let $X$ be a smooth complex algebraic variety.
\begin{defn} A variation $\V$ of mixed Hodge structure on $X(\C)$ consists of 
\begin{itemize}
\item a locally constant sheaf $\Ve_\Q$ of $\Q$-vector spaces on $X(\C)$,
\item an increasing filtration $W_*$ of $\Ve_\Q$ by locally constant
sheaves,
\item a decreasing filtration $\shF^*$ of $\shV=\Ve_\Q\tensor_\Q\Oh_X$ by
holomorphic subvector bundles,
\end{itemize}
such that for each $x\in X$, the data induce a mixed Hodge structure on
$\Ve_x$ and such that Griffith transversality holds. We denote the
category $\Var(X(\C))$.
A variation is called {\em unipotent} if all $\Gr^W_n\Ve$ are constant on $X$, e.g.,
all variation of Tate Hodge structure are.
\end{defn}
\examples $\Log^{(1)}$ and $\shG^{(1)}$ are unipotent variations on $\G_m(\C)$.

\begin{lemma} \label{II.2.2}
\[\Ext^1_{\Var(X(\C))}(\Q(0),\Q(1))\isom\Oh_{\operatorname{hol}}^*(X(\C))\tensor\Q.\]
\end{lemma}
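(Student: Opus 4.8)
The plan is to compute $\Ext^1_{\Var(X(\C))}(\Q(0),\Q(1))$ directly from the definition of a variation, unwinding what an extension of the constant variation $\Q(0)$ by $\Q(1)$ amounts to. An extension $0 \to \Q(1) \to \Ve \to \Q(0) \to 0$ of variations of mixed Hodge structure consists of: the data of the underlying extension of local systems, which is trivial since $\Q(0)$ and $\Q(1)$ are constant and $H^1(X(\C),\Q) \otimes \Q(1)$ classifies it but the weight filtration is already determined, so the $\Q$-local system with its weight filtration is canonically the constant sheaf on $\Q(1) \oplus \Q(0)$; and the Hodge filtration $\shF^*$ on $\shV = \Oh_X \oplus \Oh_X$ (with first factor $\Q(1)\otimes\Oh_X$), which by the Tate-type Hodge numbers must be $\shF^0 = $ a line subbundle of $\Oh_X^2$ projecting isomorphically to the $\Q(0)$-factor, i.e.\ the graph of a holomorphic function, while $\shF^1 = 0$ and $\shF^{-1} = \Oh_X^2$.

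First I would make precise that the extension class is therefore encoded by a single holomorphic section: writing $\shF^0$ as the graph of $f \in \Oh_{\operatorname{hol}}(X(\C))$, the extension is split (as a variation) exactly when $\shF^0$ can be conjugated to the trivial splitting by an automorphism of the extension. Next I would identify the automorphism group of the extension: an automorphism of $\Ve$ fixing $\Q(1)$ and inducing the identity on $\Q(0)$ is given, on the $\Q$-local system, by a global section of the constant sheaf $\Q(1)$, i.e.\ an element of $\Q$ (more precisely of $(2\pi i)\Q$), acting on $\shF^0$ by translating the graph by a constant. Hence two holomorphic functions $f, f'$ give isomorphic extensions iff $f - f' \in \Q$ (constants coming from $(2\pi i)^{-1}$-rational shifts), and the Baer sum corresponds to addition of functions. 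This already produces a map $\Oh_{\operatorname{hol}}(X(\C))/\Q \to \Ext^1$.

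The point where I expect the real content to sit is matching this with $\Oh^*_{\operatorname{hol}}(X(\C)) \otimes \Q$ rather than $\Oh_{\operatorname{hol}}(X(\C))/\Q$: the issue is that I have ignored the \emph{integral} (or in the $\Q$-linear setting, rational) structure on the local system, and the extensions of variations are classified not by all holomorphic $f$ but, via the exponential, by invertible holomorphic functions up to roots — i.e.\ one should really look at $H^1$ of the exponential complex $[\Oh_X \xrightarrow{\exp} \Oh_X^*]$ on $X(\C)$ after tensoring with $\Q$, which by the standard argument (the sheaf-theoretic sequence $0 \to \Q(1) \to \Oh_X \xrightarrow{\exp} \Oh_X^* \to 0$) gives $\Oh^*(X(\C)) \otimes \Q$ modulo the topological contribution $H^1(X(\C),\Q(1))$, and the latter is killed precisely because we are taking extensions of \emph{variations}, i.e.\ the weight filtration forces the local system to be the constant one. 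So the key step — and the main obstacle to writing it cleanly — is the bookkeeping that turns "graph of a holomorphic function modulo rational constants, with the correct rational lattice" into "$\Oh^*\otimes\Q$": concretely, the function $f$ attached to an extension is $\log$ of the invertible function, and well-definedness of $f$ only up to $(2\pi i)\Q$ is exactly the ambiguity in choosing a branch of $\log$ compatible with the rational structure. I would therefore organize the proof as: (i) reduce to the exponential complex; (ii) use the long exact sequence of $0 \to \Q(1) \to \Oh_X \to \Oh_X^* \to 0$ on $X(\C)$; (iii) observe that the variation condition (constancy of $\Gr^W$) is what cuts the cohomology down from hypercohomology of the full complex to $H^0(X(\C),\Oh_X^*)\otimes\Q = \Oh^*_{\operatorname{hol}}(X(\C))\otimes\Q$; and check compatibility with Baer sum, which is routine. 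A sanity check throughout is the case $X = \operatorname{pt}$, where this must recover Lemma~\ref{II.1.1} for $n=1$ composed with $\exp$, exactly as in the first Example after that lemma.
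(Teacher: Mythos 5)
There is a genuine error in the first step of your plan: the underlying local system extension of $\Q$ by $\Q(1)$ is \emph{not} forced to be trivial, and the weight filtration does not force it to be. The weight filtration only says that $\Gr^W_{-2}\Ve_\Q$ and $\Gr^W_0\Ve_\Q$ are constant; the extension between them can be (and usually is) nontrivial, being classified by $H^1(X(\C),\Q(1))$. The paper's own running example is the point: $\Log^{(1)}$ (equivalently $\shG^{(1)}$) on $\G_{m}(\C)$ is a rank-two unipotent variation of exactly this shape, and its local system is the Kummer local system, which has nontrivial monodromy -- this is even stated explicitly in the proof of the Proposition in Section III.3. Your later remark that the weight filtration ``forces the local system to be constant'' repeats the same mistake; that is precisely what does \emph{not} happen, and if it did the right-hand side of the lemma would have to collapse to extensions constant along $X$, i.e.\ to $\C/(2\pi i)\Q$, not $\Oh^*_{\operatorname{hol}}(X(\C))\tensor\Q$.

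The correct version of your picture is obtained by working on the universal cover $\widetilde{X}\to X(\C)$. There the local system trivializes as $\Q(1)\oplus\Q$, the monodromy acts by unipotent matrices $\left(\begin{smallmatrix}1&m(\gamma)\\0&1\end{smallmatrix}\right)$ with $m\in\Hom(\pi_1(X(\C)),\Q(1))=H^1(X(\C),\Q(1))$, and $\shF^0$ pulls back to the graph of a holomorphic $g$ on $\widetilde{X}$ satisfying $g(\gamma x)=g(x)+m(\gamma)$. Thus $g$ itself is multivalued on $X(\C)$ with periods in $\Q(1)$, and it is $\exp(g)$ (interpreted in $\Oh^*_{\operatorname{hol}}(X(\C))\tensor\Q$, after clearing the bounded denominator of $\operatorname{im}(m)\subset 2\pi i\Q$) that is single-valued; the pair $(m,g)$, not $g$ alone, is the datum, and it is encoded as one invertible function precisely because the monodromy class $m$ \emph{is} the topological part of that function. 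Splitting of the extension in $\Var$ then amounts to $m=0$ \emph{and} $g$ constant in $\Q(1)$, so the map to $\Oh^*_{\operatorname{hol}}(X(\C))\tensor\Q$ is injective; surjectivity and additivity under Baer sum are then routine. Your step (iii) should be replaced by this: the contribution of $H^1(X(\C),\Q(1))$ is not ``killed'' -- it is exactly the monodromy of the local system, and it is absorbed into the single-valuedness of $\exp(g)$ via the exponential sequence, which is why the answer is $\Oh^*\tensor\Q$ rather than $\Oh/\Q(1)$ or $\C/(2\pi i)\Q$.
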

\begin{proof}E.g.\cite{W} IV Theorem 3.7. a) \end{proof}
There is a notion of {\em admissible} variations of Hodge structure. Rather
than giving the definition, we give their main properties which will 
suffice for all that follows. We denote the corresponding category
$\Var(X)$ to stress its algebraic nature.
\begin{enumerate}
\item
If $X$ is compact, then all variations are admissible.
\item
Everything coming from geometry is admissible, e.g., $\shG^{(1)}$ is.
\item If $U$ is an algebraic variety and $X$ a smooth proper compactification, then (\cite{W} IV Theorem 3.7 b)):
\begin{align*}
\Ext^1_{\Var(U)}(\Q(0),\Q(1))&\isom
\left\{g\in\Oh^*_{\operatorname{hol}}(U)\tensor\Q \mid \text{$g$ meromorphic on $X$}\right\} \\
&=\Oh^*_{\operatorname{alg}}(U)\tensor\Q.
\end{align*}
\item
If $\Ve$ is admissible on $X$, then all $H^i(X,\Ve)$ carry a canonical
mixed Hodge structure. This is a deep result, due to Steenbrink-Zucker in
the case of curves, and M. Saito in general.
\item
If $Y\subset X$ is an immersion of smooth varieties of pure codimension $d$,
$U=X\setminus Y$ and $\Ve\in\Var(X)$, then there is a natural long
exact sequence in $\MHS$:
\end{enumerate}
\[
\cdots\to \HH^{i-2d}(Y,\Ve_Y(-d))\to\HH^i(X,\Ve)\to
\HH^i(U,\Ve_U)\to\HH^{i+1-2d}(Y,\Ve_Y(-d))\to\cdots.
\]
\subsection{Everything over $\R$}
The reference for the following is \cite{HW} Appendix A.2. Let $X$ be a
smooth algebraic variety over $\R$. Then there is a continuous map
$\iota:X(\C)\to X(\C)$ given by complex conjugation on points. It induces
a functor
\begin{align*}
 \iota^*:\Var(X(\C))&\to\Var(X(\C))\\
 (\Ve,W_*,\shF^*)&\mapsto (\iota^*\Ve,\iota^*W_*,\iota^*\overline{\shF}^*).
\end{align*}
\begin{defn}
An admissible variation of mixed Hodge structure defined over $\R$ is
a pair $(\Ve,F_\infty)$ where $\Ve\in\Var(X_\C)$ and 
$F_\infty:\Ve\to\iota^*\Ve$ is an involution, i.e., $F_\infty^{-1}=\iota^*F_\infty$.
\end{defn}
\begin{lemma}
Let $U$ be a smooth variety over $\R$ and $X$ a smooth compactification. Then
\[\Ext^1_{\Var(\U/\R)}(\Q(0),\Q(1))=\Oh_{\operatorname{alg}}^*(U)\tensor\Q.\]
\end{lemma}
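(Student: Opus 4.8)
The plan is to deduce the statement from the complex case, already recorded above, by Galois descent along $\C/\R$. Base-changing the compactification exhibits $X_\C$ as a smooth proper compactification of $U_\C$, so the algebraic description of $\Ext^1$ recalled in Subsection~II.2 (the third property in the list of admissible variations; compare Lemma~\ref{II.2.2}) gives a canonical isomorphism $\Ext^1_{\Var(U_\C)}(\Q(0),\Q(1))\isom\Oh_{\operatorname{alg}}^*(U_\C)\tensor\Q$. The conjugation functor $\iota^*$ acts on the left-hand side, and since $\Q(0)$ and $\Q(1)$ carry canonical real structures --- for $\Q(1)$ the involution implicit in the factor $(2\pi i)^n$ of Lemma~\ref{II.1.1} --- this produces an involution on the $\Ext^1$, hence, transported through the isomorphism, an involution $\tau$ of $\Oh_{\operatorname{alg}}^*(U_\C)\tensor\Q$. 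I would then prove the lemma in the shape $\Ext^1_{\Var(U/\R)}(\Q(0),\Q(1))=\bigl(\Oh_{\operatorname{alg}}^*(U_\C)\tensor\Q\bigr)^{\tau}=\Oh_{\operatorname{alg}}^*(U)\tensor\Q$.

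For the first equality I would show that extensions over $\R$ are exactly the $\tau$-invariant extensions over $\C$. The decisive simplification is that $\Hom_{\Var(U_\C)}(\Q(0),\Q(1))=0$, since already pointwise $\Hom_{\MHS}(\Q(0),\Q(1))=0$ for weight reasons. Identifying $\iota^*\Q(0)$ and $\iota^*\Q(1)$ with $\Q(0)$ and $\Q(1)$ via the canonical real structures, an $\R$-structure on an extension $\Ve$ of $\Q(0)$ by $\Q(1)$ in $\Var(U_\C)$ is precisely an isomorphism of extensions $F_\infty:\Ve\silo\iota^*\Ve$. Such an $F_\infty$ exists if and only if $[\Ve]$ is $\tau$-fixed; when it exists it is unique, since two choices $F_\infty,F_\infty'$ satisfy $F_\infty'=F_\infty\circ\psi$ for an automorphism $\psi$ of the extension inducing the identity on $\Q(1)$ and on $\Q(0)$, and the group of such $\psi$ is isomorphic to $\Hom_{\Var(U_\C)}(\Q(0),\Q(1))$, which vanishes; and by the same uniqueness the involutivity $F_\infty^{-1}=\iota^*F_\infty$ holds automatically. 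Thus $(\Ve,F_\infty)\mapsto[\Ve]$ identifies $\Ext^1_{\Var(U/\R)}(\Q(0),\Q(1))$ with the $\tau$-fixed subspace.

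For the second equality I would identify $\tau$ explicitly. The expectation is that $\tau$ is nothing but the natural $\Gal(\C/\R)$-action on the units of $U_\C=U\times_\R\C$, with no sign or inversion; granting this, Galois descent for the coordinate ring gives $\Oh_{\operatorname{alg}}^*(U_\C)^{\Gal(\C/\R)}=\Oh_{\operatorname{alg}}^*(U)$, and since taking invariants under the finite group $\Gal(\C/\R)$ commutes with the exact functor $-\tensor\Q$, one concludes $\bigl(\Oh_{\operatorname{alg}}^*(U_\C)\tensor\Q\bigr)^{\tau}=\Oh_{\operatorname{alg}}^*(U)\tensor\Q$, which finishes the argument.

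The real work lies in this last identification of $\tau$: it amounts to checking that the isomorphism of the complex case is compatible with real structures and carries no hidden twist. A convenient way to fix the normalisation is to test it on the Kummer extension on $\G_m$ attached to the coordinate function, which is defined over $\Q$ (equivalently, $\Log$ is defined over $\R$, cf.\ Lecture~III): this forces the coordinate to be $\tau$-fixed and thereby excludes an inversion in $\tau$. Getting the $(2\pi i)$-twist in the canonical real structure on $\Q(1)$ right is the only other point demanding genuine care; the rest is formal.
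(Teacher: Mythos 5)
The paper gives no proof here; it simply refers to \cite{HW}, Appendix A.2, so there is no in-text argument to compare against. Your Galois-descent reduction to the complex case of \cite{W}, IV, Theorem 3.7.b) is the natural self-contained route, and the main steps are sound. In particular, the observation that $\Hom_{\Var(U_\C)}(\Q(0),\Q(1))=0$ (for weight reasons) does all the work in showing both that a compatible $F_\infty$ is unique whenever $[\Ve]$ is $\tau$-fixed, and that uniqueness forces the involutivity $F_\infty^{-1}=\iota^*F_\infty$; the same vanishing gives injectivity of the forgetful map $\Ext^1_{\Var(U/\R)}\to\Ext^1_{\Var(U_\C)}$. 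The passage $(A\otimes\Q)^{\Z/2}=A^{\Z/2}\otimes\Q$ and $\Oh^*_{\operatorname{alg}}(U_\C)^{\Gal(\C/\R)}=\Oh^*_{\operatorname{alg}}(U)$ is also correct.

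The one place where the argument is not yet closed --- and you rightly flag it --- is the identification of $\tau$ with the natural $\Gal(\C/\R)$-action. Checking that the coordinate on $\G_m$ is $\tau$-fixed pins down $\tau$ on a single element of a single $U$; by itself this ``excludes an inversion'' only once you already know $\tau$ is either $g\mapsto\bar g$ or $g\mapsto\bar g^{-1}$, which is not a priori clear. To make the step rigorous you should invoke the \emph{functoriality} of the isomorphism $\Ext^1_{\Var(U_\C)}(\Q(0),\Q(1))\isom\Oh^*_{\operatorname{alg}}(U_\C)\tensor\Q$ in $U$: for $g\in\Oh^*_{\operatorname{alg}}(U_\C)$ regarded as a morphism $g:U_\C\to\G_{m,\C}$, the Kummer extension is $g^*$ of the universal one on $\G_m$, and $\iota^*g^*=\bar g^*\iota^*$ because $\iota\circ g\circ\iota=\bar g$. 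Combined with your single test case $\iota^*K(t)\isom K(t)$ on $\G_m$, this gives $\tau(g)=\bar g$ for all $g$ and all $U$, after which Galois descent finishes the proof exactly as you say. So the proposal is correct in strategy and nearly complete; only this functoriality step needs to be written out.
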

In particular, we need the case
$X=\spec K\tensor_\Q\R$ where $K$ is some finite extension of $\Q$. Note that
then $X(\C)=\coprod_{\sigma:k\to\C}\operatorname{point}$. Hence
\[ \Ext^1_{\Var(X/\R)}(\Q(0),\Q(n))=
\left(\bigoplus_{\sigma:K\to\C}\C/(2\pi i)^n\Q\right)^+ \]
where $+$ denotes the invariant part under the joint operation of complex
conjugation on $X(\C)$ and $\C/(2\pi i)^n\Q$.

\section{The Logarithm and the polylogarithm}
The aim of this lecture was to construct two things:
\begin{description}
\item[a)] a (pro)-object $\Log$ in $\Var(\G_{m,\R}/\R)$ such that (at least)
\[\Gr_*^W\Log=\prod_{n\geq 0}\Q(n)\text{\ and\ }
\Log_1=\prod_{n\geq 0}\Q(n).\]
\item[b)]
on $\U_\R=\Pe^1_\R \setminus\{0,1,\infty\}$ an element
\[\pol\in\Ext^1_{\Var(\U_\R/\R)}(\Q(0),\Log_\U).\]
\end{description}
There are three possibilities to do this: explicitly (talk IV), 
geometrically (end of III and VII) and by a universal property. It is
this last method that we describe first.
\subsection{The logarithm} \label{III.1}
\begin{theorem}[Chen, \cite{BZ} 6.23] Let $X$ be a smooth algebraic variety over
$\C$. Let $x\in X(\C)$ and $\pi=\pi_1(X(\C),x)$. We denote
$U=\Q[\pi]$ and its augmentation ideal $\ea$. Then the completion
$\hat{U}=\prolim U/\ea^n$ carries a (unique) mixed Hodge structure
such that
\begin{itemize}
\item
$\hat{U}\tensor\hat{U}\xrightarrow{\operatorname{mult}}{}\hat{U}$ and
$\operatorname{unity}:\Q(0)\to\hat{U}$ are morphisms of mixed Hodge
structures; 
\item
there is an isomorphism of mixed Hodge structures
\[ \ea/\ea^2\leftarrow\pi_1(X(\C),x)^{\operatorname{ab}}\tensor\Q\isom
H_1(X(\C),\Q).\]
\end{itemize}
\end{theorem}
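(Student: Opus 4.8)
The plan is to dispose of uniqueness by a soft strictness argument and to obtain existence from Chen's theory of iterated integrals, fed by Deligne's mixed Hodge theory of the open variety $X$; the genuine work is in the existence half. For uniqueness, recall that a morphism of mixed Hodge structures is strictly compatible with $W$ and $F$, so any mixed Hodge structure on the pro-object $\hat U=\prolim U/\ea^n$ for which $\operatorname{mult}$ and $\operatorname{unity}$ are morphisms has its filtrations determined by their restriction to a set of topological algebra generators together with the multiplication rule. Since the associated graded $\bigoplus_n\ea^n/\ea^{n+1}$ is a graded Hopf algebra generated in degree one by $\ea/\ea^2$, which by hypothesis carries the standard mixed Hodge structure of $H_1(X(\C),\Q)$, multiplicativity and strictness force the filtrations on each finite-dimensional quotient $U/\ea^n$, hence on $\hat U$. (The details of this bookkeeping are in \cite{BZ}, 6.23.)

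\emph{Existence.} Dualize the picture: put $\Oh:=\varinjlim_n(U/\ea^n)^\vee$, the Hopf algebra of functions on the prounipotent Malcev completion of $\pi$. Chen's $\pi_1$ de Rham theorem gives, for connected $X$, a natural Hopf-algebra isomorphism
\[
\Oh\isom H^0\bigl(\bar B(A^\bullet)\bigr),
\]
where $\bar B$ denotes the reduced bar complex and $A^\bullet$ a commutative differential graded algebra model of the de Rham complex of $X(\C)$. The bar construction is functorial and multiplicative, so it transports a \emph{multiplicative} cohomological mixed Hodge complex to one: choose a smooth compactification $X\hookrightarrow\bar X$ with $D=\bar X\ohne X$ a normal crossing divisor and take for $A^\bullet$ a $\Q$-structured model of the logarithmic de Rham complex $\Omega^\bullet_{\bar X}(\log D)$ with Deligne's pole-order weight filtration $W$ and stupid Hodge filtration $F$. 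Applying $\bar B$ induces $W$- and $F$-filtrations on $\bar B(A^\bullet)$, hence on $\Oh$, and dually on each $U/\ea^n$. The crux, and the step I expect to be the main obstacle, is to prove that these induced filtrations define an honest mixed Hodge structure: one must see that the relevant spectral sequence of the filtered bar complex degenerates and that $\Gr^W$ comes out polarizable of the correct weights. This is the content of the theory of the de Rham fundamental group (Morgan, Hain): the bar-length filtration is an increasing filtration by sub-mixed-Hodge-complexes whose successive quotients are built out of tensor powers of the reduced cohomology of $X$, on which Deligne's mixed Hodge structure is already available; since all weights occurring on $\ea/\ea^2=H_1(X(\C),\Q)$ are negative, the vanishing of morphisms between pure Hodge structures of different weights leaves no room for obstructing differentials, and one concludes that $(U/\ea^n,W,F)$ is a mixed Hodge structure. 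Passing to the inverse limit over $n$ — legitimate because morphisms of mixed Hodge structures are strict, so the transition maps are well behaved — yields the pro-mixed Hodge structure on $\hat U$.

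\emph{The two compatibilities.} The reduced bar complex is a Hopf algebra object in mixed Hodge complexes (shuffle product, deconcatenation coproduct), and under the duality $\hat U=\Oh^\vee$ the group multiplication $\hat U\tensor\hat U\to\hat U$ and the unit $\Q(0)\to\hat U$ are dual to the coproduct and counit of $\Oh$; hence they are morphisms of pro-mixed Hodge structures. For the degree-one statement, the bar-length-$\le 1$ part of $\bar B(A^\bullet)$ computes $\Q(0)\oplus H^1(X(\C),\Q)$ with its Deligne mixed Hodge structure, so its indecomposables form $H^1(X(\C),\Q)$; dualizing, $\ea/\ea^2$ acquires the dual mixed Hodge structure, canonically $H_1(X(\C),\Q)$, and the Hurewicz isomorphism $\pi^{\operatorname{ab}}\tensor\Q\isom\ea/\ea^2$ is tautologically compatible. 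This is the asserted isomorphism of mixed Hodge structures, and it fixes the normalization referred to in the uniqueness step.
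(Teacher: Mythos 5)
The paper does not prove this theorem at all: it is stated as a known result and cited to \cite{BZ}, 6.23 (and ultimately to Chen, Morgan, and Hain), with no \verb|proof| environment. So there is no ``paper's own proof'' to compare against; what you have written is an outline of the standard proof from the literature, and on that footing it is the right approach.

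Your existence half is essentially the Chen--Morgan--Hain argument, and the key moves are in place: replace $\hat U$ by its dual Hopf algebra $\Oh$, identify $\Oh$ with $H^0$ of the reduced bar complex of a cdga model of the de Rham complex, feed in the logarithmic de Rham complex with Deligne's $W$ and $F$ to make the bar complex a multiplicative mixed Hodge complex, and read off the Hopf structure to get the two compatibilities. The weakest point is the sentence asserting that ``vanishing of morphisms between pure Hodge structures of different weights leaves no room for obstructing differentials''; this gestures at the right phenomenon but is not what one actually proves. The genuine technical content is that the bar-length filtration exhibits $\bar B(A^\bullet)$ as a filtered object in the category of (cohomological) mixed Hodge complexes, with each graded piece a tensor power of the mixed Hodge complex computing $\widetilde H^\bullet(X)$, and that the resulting $W$ and $F$ on $H^0(\bar B(A^\bullet))$ satisfy the axioms of a mixed Hodge structure; this requires the degeneration statements built into Deligne's theory of mixed Hodge complexes (and Morgan's and Hain's verification that they survive the bar construction), not merely a weight-counting remark. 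The uniqueness half is the other soft spot: knowing the MHS on each $\ea^n/\ea^{n+1}$ (forced via the surjection $(\ea/\ea^2)^{\otimes n}\twoheadrightarrow\ea^n/\ea^{n+1}$) and having the filtered extensions $U/\ea^n$ does not by itself pin down the MHS on $U/\ea^{n+1}$, since a priori there could be several MHS lifting a given extension of associated gradeds; the actual argument in \cite{BZ}/Hain uses the multiplicativity more seriously (and the negativity of the weights of $\ea/\ea^2$) to force $W$, and then $F$ is forced by strictness. Since you defer the bookkeeping to \cite{BZ}, this is acceptable as an outline, but as written the uniqueness paragraph does not yet constitute an argument.
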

Our example is $X=\G_m$, $x=1$ and hence $\pi=\Z\gamma$ with a positively
oriented loop around $0$. Then $\ea$ is generated by $\gamma-1$. We get
???? 
$\hat{U}=\Q[e]$ where $e=\log\gamma$. Note that the latter element is
defined in the completion. The mixed Hodge structure on 
$\ea/\ea^2\isom H_1(\C^*,\Q)\isom\Q(1)$ is in fact pure. All in all
\[ \hat{U}=\Sym^*\ea/\ea^2=\prod_{n\geq 0}\Q(n). \]
\begin{theorem}[Hain-Zucker, \cite{BZ} 7.19]Let $X$ be a smooth connected
algebraic variety over $\C$. Then there is an equivalence of categories
\begin{gather*}
\{ \text{admissible unipotent variations on $X$} \}\\
\updownarrow\\
\left\{\begin{array}{l}
V\in\MHS\text{ together with an operation }\hat{U}\tensor V\to V\\
\text{which is a morphism of $\MHS$.}
\end{array}\right\}
\end{gather*}
where we assign to a variation its monodromy representation on the
stalk at $x$.
\end{theorem}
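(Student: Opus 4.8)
The plan is to prove the Hain--Zucker equivalence by exhibiting explicit quasi-inverse functors and checking they are mutually inverse, reducing everything to the well-understood unipotent situation. First I would recall the underlying topological statement: on a connected $X(\C)$ with base point $x$, the category of unipotent local systems is equivalent, via the monodromy representation on the stalk at $x$, to the category of finite-dimensional $\Q[\pi]$-modules on which the augmentation ideal $\ea$ acts nilpotently, equivalently to finitely generated $\hat U$-modules that are unipotent. This is standard (it is the $\pi_1$-description of the universal pro-unipotent local system, i.e.\ the $\Log$ of the previous subsection). So the real content is to upgrade this topological equivalence to one of Hodge-theoretic data, using the mixed Hodge structure on $\hat U$ supplied by Chen's theorem.

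The key steps, in order, would be: (i) Given an admissible unipotent variation $\Ve$ on $X$, form the stalk $V:=\Ve_x$, which by admissibility carries a canonical $\MHS$; one must produce the action map $\hat U\tensor V\to V$ and show it is a morphism of $\MHS$. The action itself is just the monodromy action of $\pi$ extended to $\Q[\pi]$ and then, by unipotence, to the completion $\hat U$; the point is Hodge-theoretic compatibility, which one gets because the monodromy/limit-Hodge-structure formalism identifies the nilpotent logarithm of monodromy with a morphism twisting by $\Q(1)$, matching exactly the weight-$1$ piece $\ea/\ea^2\isom H_1(X(\C),\Q)$ of $\hat U$ from Chen's theorem. (ii) Conversely, given $(V,\hat U\tensor V\to V)$, restrict along $\pi\to\Q[\pi]\to\hat U$ to get a unipotent $\pi$-module, hence a unipotent local system $\Ve_\Q$; then equip $\shV=\Ve_\Q\tensor\Oh_X$ with the Hodge filtration by transporting the one on $V$ via a canonical trivialization (a choice of path/tangential base point, cf.\ the explicit description of $\Log$ to come in talk IV), and check that Griffith transversality and admissibility follow from the $\MHS$-compatibility of the action together with the admissibility of $\Log$ itself. (iii) Verify that these two constructions are quasi-inverse: one direction is essentially tautological on the topological level, and on Hodge data it amounts to the statement that the limit MHS at $x$ recovers the stalk MHS; the other direction uses that a unipotent variation is determined by its fibre at $x$ plus the $\hat U$-action, which is where one invokes the universal property of $\Log$.

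I expect the main obstacle to be step (i): showing that the monodromy action $\hat U\tensor V\to V$ is genuinely a morphism of mixed Hodge structures (not merely of $\Q$-vector spaces compatible with weights), and dually in step (ii) that the resulting filtered bundle is an \emph{admissible} variation rather than just a variation. This is precisely the ``deep result'' alluded to earlier in the excerpt (Steenbrink--Zucker for curves, M.\ Saito in general): it is the compatibility of the limit mixed Hodge structure with the $\Oh_X$-module structure and the Griffith transversality near the boundary of a good compactification $X\hookrightarrow\bar X$. Rather than reprove this, I would cite \cite{BZ} 7.19 for the hard analytic input and present the argument above as the organizing framework, emphasizing that once admissibility is granted, the equivalence is formal: it is the MHS-enriched version of the classical equivalence between unipotent local systems and nilpotent $\pi_1$-modules, with $\hat U$ playing the role of the universal unipotent object in both the topological and the Hodge-theoretic categories.
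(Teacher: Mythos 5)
The paper does not prove this theorem; it is quoted as a black box with the citation \cite{BZ} 7.19 (Hain--Zucker), exactly as you also end up doing in your final paragraph. So there is no argument in the source against which to compare your sketch step for step, and your ultimate deferral to \cite{BZ} matches the paper's own treatment.

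That said, one genuine flaw in your outline is worth flagging, because it would lead you astray if you tried to make the argument precise. In step (ii) you propose to put the Hodge filtration on $\shV = \Ve_\Q \otimes \Oh_X$ by ``transporting the one on $V$ via a canonical trivialization (a choice of path/tangential base point).'' This cannot be right: a flat (locally constant) trivialization that carries the Hodge filtration at $x$ to a filtration on the whole bundle would make $\shF^*$ horizontal, i.e.\ the variation would be constant. The Hodge filtration on a nontrivial unipotent variation genuinely varies, and it is not propagated by parallel transport. The correct mechanism, which is the actual content of Hain--Zucker and which the paper encodes as the universal property of $\Gen_x$ in Proposition \ref{III.1.4}, is to form the variation as (roughly) $\Gen_x \otimes_{\hat U} V$: the pro-variation $\Log = \Gen_x$ carries a nonconstant Hodge filtration given by Chen's iterated-integral (reduced bar) description of $\hat U$, and the filtration on the bundle built from $(V, \hat U \otimes V \to V)$ is inherited from that of $\Log$, not from a trivialization. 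Concretely, the entries of the period matrix (the iterated integrals, $\log$, $\Li_k$, etc.\ in lecture IV) are precisely the non-flatness of the Hodge filtration, and they appear on the geometric side, not on the $\MHS$ side $V$. Similarly, in step (i) the compatibility you need is not just that the nilpotent log of a single monodromy operator is a $\Q(1)$-twist morphism, but that the full multiplication $\hat U \otimes \hat U \to \hat U$ is a morphism of $\MHS$ (Chen's theorem), which is what makes the monodromy action on any fibre $\MHS$-compatible via the universal property. With these corrections the framework you describe is the right one, and, as you note, the hard admissibility/Griffith-transversality input at the boundary of a compactification is the analytic core one cites from \cite{BZ} (resp.\ Steenbrink--Zucker and M.\ Saito) rather than reproves here.
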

\begin{defn}[\cite{W} p.43]
$\Gen_x$, the generic variation based at $x$ is the variation corresponding
to the representation $\hat{U}\tensor\hat{U}\to\hat{U}$ given by multiplication.
\end{defn}
The generic variation has a universal property. Let 
\[ \Gamma:\MHS\to \text{$\Q$-vector spaces}\]
 be the global section functor, i.e., 
$\Gamma(H)=\Hom_{\MHS}(\Q(0),H)=W_0H_\Q\cap F^0H_\C$.
\begin{prop} \label{III.1.4}
The pair $(\Gen_x,1\in \Gamma(\hat{U}))$ (pro)-represents the functor
\[ \Gamma(x^*?):\{\text{unipotent objects in $\Var(X)$}\}\longrightarrow
\text{$\Q$-vector spaces.} \]
\end{prop}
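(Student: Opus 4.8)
The plan is to exhibit the pair $(\Gen_x, 1 \in \Gamma(\hat U))$ as a representing object by constructing, for each unipotent $\V \in \Var(X)$, a natural bijection between $\Gamma(x^* \V)$ and the set of morphisms $\Gen_x \to \V$ in $\Var(X)$, and then check functoriality. The key input is the Hain--Zucker equivalence (Theorem preceding the statement), which translates everything into the category of $\MHS$-modules over $\hat U$: a morphism $\Gen_x \to \V$ of variations is the same as a morphism $\hat U \to x^*\V$ of $\hat U$-modules in $\MHS$, where $\hat U$ acts on itself by left multiplication. So the whole statement reduces to the assertion that, for a left $\hat U$-module $M$ in $\MHS$, evaluation at $1 \in \hat U$ gives a bijection
\[
\Hom_{\hat U\text{-}\MHS}(\hat U, M) \silo \Gamma(M) = \Hom_{\MHS}(\Q(0), M).
\]

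First I would verify this last bijection, which is the heart of the matter and also where the only real subtlety lies. In the ordinary (non-Hodge) setting, $\Hom_{\hat U}(\hat U, M) \to M$, $\phi \mapsto \phi(1)$, is of course bijective with inverse $m \mapsto (u \mapsto u\cdot m)$; one has to check that this restricts correctly to the Hodge-theoretic subcategories. Given $m \in \Gamma(M)$, i.e.\ $m \in W_0 M_\Q \cap F^0 M_\C$, the map $u \mapsto u\cdot m$ is $\hat U$-linear by associativity, and it is a morphism of $\MHS$ because the $\hat U$-action on $M$ is a morphism of $\MHS$ and $1 \in \Gamma(\hat U)$ (so $m = 1 \cdot m$ sits in the image of $\Q(0) \to \hat U \to M$). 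Conversely any $\hat U$-linear morphism of $\MHS$ $\phi: \hat U \to M$ sends $1$ into $\Gamma(M)$ since $\phi$ is a morphism of $\MHS$ and $1 \in \Gamma(\hat U)$. The two constructions are visibly mutually inverse, using that $\hat U$ is generated as a $\hat U$-module by $1$.

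The remaining step is naturality: given a morphism $f: \V \to \V'$ of unipotent variations, with induced $\hat U$-linear $x^*f: x^*\V \to x^*\V'$, the square relating $\Gamma(x^*\V) \to \Hom(\Gen_x,\V)$ and $\Gamma(x^*\V') \to \Hom(\Gen_x,\V')$ commutes; this is immediate from the explicit formula $m \mapsto (u \mapsto u\cdot m)$ and the $\hat U$-linearity of $x^*f$. Finally, since $\hat U = \prolim U/\ea^n$ is only a pro-object, one should phrase ``represents'' in the pro-sense: a morphism out of $\Gen_x$ is by definition a compatible family of morphisms out of the finite-level quotients, and the bijection above is the inverse limit of the corresponding finite-level bijections, each of which is handled exactly as above. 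I expect the pro-object bookkeeping and the precise compatibility of the Hain--Zucker equivalence with base change along $x$ to be the only points requiring care; the algebra itself is formal.
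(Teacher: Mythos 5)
Your proof is correct and follows exactly the route the paper intends: the paper's entire proof is ``This follows immediately from the Theorem of Hain and Zucker,'' and you supply the missing details, namely the translation via Hain--Zucker to $\hat{U}$-modules in $\MHS$, the evaluation-at-$1$ Yoneda bijection $\Hom_{\hat{U}\text{-}\MHS}(\hat{U},M)\silo\Gamma(M)$, naturality, and the pro-object bookkeeping. No discrepancies.
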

\begin{proof} This follows immediately from the Theorem of Hain and Zucker.
\end{proof}

We can now identify the object we were after:
\begin{defn}[\cite{W} p. 94]
Let the {\em logarithmic sheaf} on $\G_m$ be $\Log=\Gen_1$. It has the above universal property
with respect to the stalk at $1$, which is $\prod_{i\geq 0}\Q(i)$.
Let $\Log^{(n)}=\Log/W_{-2n-2}\Log$ be the quotients of finite length.
\end{defn}
In fact $\Log$ is easily seen to be defined over $\R$.

\subsection{The polylogarithm extension} \label{III.2}
From the Leray spectral sequence for the composition of functors
\[ \Hom_{\Var(\G_m/\R)}(\Q(0),?)=\Hom_{\MHS/\R}(\Q(0),\HH^0(\G_m(\C),?))\]
 we
get the short exact sequence
\begin{multline*}
0\to\Ext^1_{\MHS/\R}(\Q(0),\HH^0(\U(\C),\Log_\U)) \to
\Ext^1_{\Var(\U/\R)}(\Q(0),\Log_\U)\\
\to \Hom_{\MHS/\R}(\Q(0),\HH^1(\U,\Log_\U)) \to 0.
\end{multline*}
\begin{lemma} \label{III.2.1}
\begin{align*}
\HH^0(\U,\Log_\U)&=\HH^0(\G_m,\Log)=0,\\
\HH^1(\U,\Log_\U)&=\Q(-1)\oplus\Log_1(-1)=\Q(-1)\oplus\prod_{k\geq 0}\Q(k-1).
\end{align*}
The map $\HH^1(\U,\Log_\U)\to \Log_1(-1)$ is residue at the point $1$.
\end{lemma}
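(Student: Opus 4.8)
The plan is to compute the cohomology of $\Log$ on $\G_m$ and then transport the result to $\U$ via the localization sequence from II.2.5. Recall that $\Log = \prolim \Log^{(n)}$ with $\Gr^W_{-2k}\Log = \Q(k)$, so by compatibility of cohomology with the relevant limits it suffices to understand $\HH^*(\G_m(\C),\Log^{(n)})$ and pass to the limit. The underlying local system of $\Log$ on $\G_m(\C) = \C^*$ has monodromy given by the unipotent operator $\exp(e) = 1 + e + e^2/2 + \cdots$ acting on $\prod_{k\geq 0}\Q(k)$; since $\C^*$ is homotopy equivalent to $S^1$, its cohomology is computed by the two-term complex $[M \xrightarrow{\;T-1\;} M]$ in degrees $0$ and $1$, where $M$ is the stalk and $T$ the monodromy. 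So $\HH^0 = \Kern(T-1)$ and $\HH^1 = \coker(T-1)$, and the point is that $T - 1 = e + e^2/2 + \cdots$ is, up to the invertible factor $1 + e/2 + \cdots$, just multiplication by $e$, which shifts $\Q(k) \to \Q(k+1)$. On the pro-object this map is injective with cokernel $\Q(0)$ — but one must be careful with the Tate twists: $e$ has weight $-2$, i.e. $H_1(\C^*)\isom\Q(1)$, so the boundary map realizes $\HH^1(\G_m,\Log)$ as a twist of $\Log_1$, and a weight bookkeeping gives $\HH^1(\G_m,\Log)=\Log_1(-1)=\prod_{k\geq 0}\Q(k-1)$ while $\HH^0(\G_m,\Log)=0$. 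This last vanishing is really the universal property of $\Log$ in disguise: $\HH^0(\G_m,\Log) = \Gamma(\HH^0(\G_m(\C),\Log))$ would represent global sections, and a nonzero one would split off a copy of $\Q(0)$, contradicting that $\Log = \Gen_1$ has no such splitting.

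Next I would feed this into the localization sequence of II.2.5 for the immersion $\{1,\infty\}\hookrightarrow \G_m$ — or rather, work on $\Pe^1$ and remove $\{0,1,\infty\}$ in the appropriate order, keeping track that $\Log$ extends only over $\G_m$, so the relevant closed subscheme inside $\G_m$ is the single point $Y = \{1\}$ (codimension $d = 1$), with $\U = \G_m \setminus \{1\}$ and $\Ve = \Log$. The sequence reads
\[
\cdots \to \HH^{i-2}(\{1\},\Log_1(-1)) \to \HH^i(\G_m,\Log) \to \HH^i(\U,\Log_\U) \to \HH^{i-1}(\{1\},\Log_1(-1)) \to \cdots .
\]
In degree $0$: $\HH^0(\U,\Log_\U)$ sits between $\HH^0(\G_m,\Log) = 0$ and $\HH^{-1}(\{1\},\cdots) = 0$, so it vanishes. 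In degree $1$: we get
\[
0 \to \HH^1(\G_m,\Log) \to \HH^1(\U,\Log_\U) \to \HH^0(\{1\},\Log_1(-1)) \to \HH^2(\G_m,\Log).
\]
Since $\G_m(\C)$ has cohomological dimension $1$, $\HH^2(\G_m,\Log) = 0$, so the residue map $\HH^1(\U,\Log_\U) \to \HH^0(\{1\},\Log_1(-1)) = \Log_1(-1)$ is surjective with kernel $\HH^1(\G_m,\Log) = \Log_1(-1)$. To conclude $\HH^1(\U,\Log_\U) = \Q(-1)\oplus\Log_1(-1)$ I would exhibit a splitting: the class of the constant section $\Q(0)\hookrightarrow\Log$ combined with $\HH^1(\G_m,\Q(0)) = \Q(-1)$ (the fundamental class of $\G_m$) gives a natural summand $\Q(-1)\hookrightarrow\HH^1(\G_m,\Log)\hookrightarrow\HH^1(\U,\Log_\U)$, and one checks this maps isomorphically onto the $\Q(-1)=\Q(0)(-1)$-factor — i.e. the bottom weight piece — of $\Log_1(-1)$ under the residue, which trivializes the extension.

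The main obstacle I expect is the careful weight/Tate-twist bookkeeping together with the passage to the pro-limit. Concretely: (i) justifying that $\HH^*(\G_m(\C), -)$ commutes with $\prolim$ over the tower $\Log^{(n)}$ — this is clean because the transition maps are surjective with kernels that are themselves successive Tate extensions, so the relevant $\lim^1$ vanishes and each $\HH^i$ is finite-dimensional at each level; (ii) pinning down the exact twist so that $\HH^1$ comes out as $\Log_1(-1)=\prod_{k\geq 0}\Q(k-1)$ rather than some other reindexing — here the safest route is to run the computation first for $\Log^{(0)} = \Q(0)$, where $\HH^1(\G_m,\Q(0)) = \Q(-1)$ is classical, and then for $\Log^{(1)}$, which is an extension of $\Q(0)$ by $\Q(1)$, and observe the pattern stabilizes; (iii) verifying the identification of the boundary map with the residue — this is part of the construction of the localization sequence in II.2.5 and for the point $1$ is essentially the statement that the residue of $\Log$ along $1$ recovers $\Log_1$ up to the Tate twist coming from the Gysin map. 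Everything else is bounded by the cohomological dimension of $\C^*$ and a two-term-complex computation, both routine once the twists are fixed.
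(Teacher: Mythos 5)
Your overall strategy is sound — compute $\HH^*(\G_m,\Log)$ via the two-term circle complex, then transfer to $\U$ by localization for $\{1\}\hookrightarrow\G_m$ — but the central computation contradicts itself. You correctly observe that $T-1$ equals multiplication by $e$ up to a unit, that $e$ is the shift $\Q(k)\to\Q(k+1)$ on $\prod_{k\geq 0}\Q(k)$, and that the cokernel of this shift is the single piece $\Q(0)$. That gives $\HH^1(\G_m,\Log)\cong\Q(-1)$, \emph{one}-dimensional after the Tate twist. Yet in the very next sentence you assert $\HH^1(\G_m,\Log)=\Log_1(-1)=\prod_{k\geq 0}\Q(k-1)$, which is infinite-dimensional; no amount of twist bookkeeping turns a one-dimensional cokernel into the whole stalk. (You would get $\Log_1(-1)$ if the monodromy were trivial, but the unipotent monodromy is exactly what the cokernel is detecting.) With the corrected value, the localization sequence reads $0\to\Q(-1)\to\HH^1(\U,\Log_\U)\to\Log_1(-1)\to 0$, and the direct-sum claim is the assertion that this splits.

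Your proposed splitting cannot work: by exactness, the image of $\HH^1(\G_m,\Log)$ inside $\HH^1(\U,\Log_\U)$ is \emph{the kernel} of the residue at $1$, so it maps to $0$ in $\Log_1(-1)$ and cannot hit the bottom weight factor. Moreover the ``constant section $\Q(0)\hookrightarrow\Log$'' does not exist as a morphism of variations: $\Q(0)$ is a \emph{quotient} of $\Log$, not a subobject, and a splitting $\Q(0)\to\Log$ would produce a nonzero element of $\HH^0(\G_m,\Log)$, contradicting the vanishing you established in the previous step. A splitting that actually works goes the other way: the quotient $\Log\to\Q(0)$ induces $\HH^1(\U,\Log_\U)\to\HH^1(\U,\Q(0))$, and composing with the residue at $0$ (or $\infty$) gives a retraction $\HH^1(\U,\Log_\U)\to\Q(-1)$ of the inclusion $\HH^1(\G_m,\Log)\hookrightarrow\HH^1(\U,\Log_\U)$. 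Alternatively, compute directly over $\U$: since $\pi_1(\U(\C))$ is free on $\alpha_0,\alpha_1$ with $\alpha_1$ acting trivially on $\Log_1$ (the variation extends over $1$), the group cohomology gives $H^1=\coker(\alpha_0-1)\oplus\Log_1$ at once, and after twisting this is $\Q(-1)\oplus\Log_1(-1)$ with residue at $1$ being the second projection.
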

Hence 
\[\Ext^1_{\Var(\U/\R)}(\Q(0),\Log_\U)=
       \Hom_{\MHS/\R}(\Q(0),\Q(-1)\oplus\Log_1(-1))=\Q\; .\]
\begin{defn}
We define the {\em polylogarithmic extension} 
\[\pol\in\Ext^1_{\Var(\U/\R)}(\Q(0),\Log_\U)\]
 as the preimage of $1$ under the above identification. In terms
of the group $\Hom_{\MHS/\R}(\Q(0),\Q(-1)\oplus\Log_1(-1))$ it is given
by $1\mapsto e\tensor (2\pi i)^{-1}$.
\end{defn}

\subsection{Geometric origin of $\Log^{(1)}$}
Recall the variation $\shG^{(1)}$ with fibre 
$\HH^1(\G_m(\C)\rel \{1\}\amalg \{z\},\Q(1))$ at $z\in\C^*$. It is unipotent and
admissible. At $z=1$, the short exact sequence of mixed Hodge structures
\[ 0\to \Q(1)^2/\Delta(\Q(1))\to \shG^{(1)}_1\to \Q(0) \to 0\]
has a splitting by $\HH^1(\G_m(\C)\rel\{1\},\Q(1))\isom\Q(0)$. This defines
a global section of $\shG^{(1)}_1$. By the universal property of $\Log$, there
is a canonical morphism 
\[\phi:\Log\to\shG^{(1)}\]
compatible with the projection to $\Q(0)$.
\begin{prop}[\cite{HW} Theorem 4.11]\ \\
$\phi$ induces an isomorphism on $\Log^{(1)}$.
\end{prop}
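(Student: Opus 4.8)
The plan is to verify that the canonical morphism $\phi\colon\Log\to\shG^{(1)}$, after passing to the quotient $\Log^{(1)}=\Log/W_{-4}\Log$, is an isomorphism of variations. Since both $\Log^{(1)}$ and $\shG^{(1)}$ are admissible unipotent variations on $\G_m(\C)$, by the Hain--Zucker equivalence (Theorem in III.1) it suffices to check that the induced map on stalks at $1$, compatible with the $\hat U$-action, is an isomorphism of mixed Hodge structures with $\hat U$-operation. So the whole problem reduces to a statement about the two-dimensional mixed Hodge structures $\Log^{(1)}_1$ and $\shG^{(1)}_1$ together with their monodromy.

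First I would record what each side is. On the one hand, $\Log^{(1)}_1 = \hat U/\ea^2 = \Q(0)\oplus\Q(1)$ as a mixed Hodge structure, with the extension class trivial (it is canonically split at the base point $1$, by construction of $\Gen_1$), and with $e=\log\gamma$ acting as the nilpotent operator sending $\Q(0)$ isomorphically onto $\Q(1)$ and killing $\Q(1)$. On the other hand, from the short exact sequence
\[
0\to\Q(1)^2/\Delta(\Q(1))\to\shG^{(1)}_1\to\Q(0)\to 0
\]
one sees $\shG^{(1)}_1$ is again an extension of $\Q(0)$ by $\Q(1)$; at $z=1$ the class is trivial because of the splitting coming from $\HH^1(\G_m(\C)\rel\{1\},\Q(1))\isom\Q(0)$ that was used to produce the global section, so $\shG^{(1)}_1\isom\Q(0)\oplus\Q(1)$ as well. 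Thus on stalks at $1$ both are abstractly $\Q(0)\oplus\Q(1)$, and $\phi_1$ is compatible with the projections to $\Q(0)$ and is a map of mixed Hodge structures.

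Next I would pin down $\phi_1$ completely. Because $\phi$ is the morphism provided by the universal property of $\Log$ associated to the chosen global section $s\in\Gamma(\shG^{(1)}_1)$, the composite $\Log\xrightarrow{\phi}\shG^{(1)}\to\Q(0)$ is the canonical projection, hence $\phi_1$ is the identity on the $\Q(0)$-quotient; it remains to see $\phi_1$ is an isomorphism on the sub-$\Q(1)$, i.e.\ nonzero there. Here is where the monodromy enters: $\phi$ is $\hat U$-equivariant, the action of $e$ on $\Log^{(1)}_1$ identifies $\Q(0)\isom\Q(1)$ inside the stalk, and the action of $e$ on $\shG^{(1)}_1$ is the monodromy of the variation $\shG^{(1)}=\HH^1(\G_m(\C)\rel\{1\}\amalg\{z\})$ as $z$ loops around $0$ — which is nonzero, being essentially the connecting/variation map of the relative cohomology long exact sequence. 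Equivariance then forces the square relating the two $e$-actions to commute, and since $e$ is surjective onto the $\Q(1)$-part on the $\Log$ side and nonzero on the $\shG^{(1)}$ side, $\phi_1$ must be an isomorphism on $\Q(1)$ too. Being an isomorphism on both graded pieces of a two-step filtration, $\phi_1$ is an isomorphism, and therefore so is $\phi$ on $\Log^{(1)}$.

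The main obstacle is the last point: showing the $e$-action (monodromy) on $\shG^{(1)}_1$ is genuinely nonzero on the weight-$-2$ part, equivalently that $\shG^{(1)}$ is a \emph{non-split} variation even though its fibre at $1$ is split as a mixed Hodge structure — the splitting at $z=1$ is only pointwise, not flat. Concretely this amounts to computing the variation of Hodge structure map in the relative cohomology sequence $\HH^0(\{1\}\amalg\{z\})\to\HH^1(\G_m\rel\{1\}\amalg\{z\})$ and checking it is nonzero as $z\to z e^{2\pi i}$; this is the same elementary computation that identifies $\shG^{(1)}(z)$ with $E_{\log z}$ and hence with $\Log^{(1)}(z)$ (promised in lecture III), and it can be done by hand with the explicit description of Lemma II.1.1. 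Everything else — admissibility of $\shG^{(1)}$, the Hain--Zucker reduction to stalks, and compatibility over $\R$ — is either already recorded in the excerpt or formal.
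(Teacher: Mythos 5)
Your proof is correct and follows essentially the same route as the paper's: both reduce to the underlying topological data (you via the Hain--Zucker equivalence and $\hat U$-equivariance of $\phi_1$ at the stalk, the paper directly by noting it suffices to check on local systems), observe both sides are rank two with $\phi$ already an isomorphism on the $\Q(0)$-quotient, and then boil the whole thing down to the non-triviality of the monodromy of $\shG^{(1)}$, which is checked by an explicit computation. Your $e$-equivariance argument and the paper's image-dimension/splitting argument are two phrasings of the same deduction.
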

\begin{proof} The morphism factors for weight reasons. It is enough to check
that it induces an isomorphism on the underlying local systems.
Note that both objects are $2$-dimensional. The image
is at least one-dimensional. If is was indeed just one-dimensional, then
$\shG^{(1)}$ would be split and hence constant. So all we have to see is
whether $\shG^{(1)}$ has non-trivial monodromy. This is not hard to
do explicitly.
\end{proof}


\section{Explicit description of the polylog}
In this lecture, we use the abstract definition of the polylog to deduce
its main characteristic property, the so-called {\it rigidity principle}.
We then determine the explicit shape of $\pol$.

\subsection{Rigidity} 
Denote by $\loc(M)$ the category of local systems in finite dimensional
$\Q$-vector spaces on a topological space $M$, and by
\[
\For: \Var (X / \R) \longrightarrow \loc(X(\C))
\]
the forgetful functor.
\begin{theorem}[\cite{B2}, 2.1, \cite{W}, III, Theorem 2.1] \label{IV.1.1}
$\pol$ is uniquely determined by 
\[
\For(\pol) \in \Ext^1_{\loc(\U(\C))} \left( \Q, \For(\Log_\U) \right) \; .
\]
\end{theorem}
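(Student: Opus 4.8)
The plan is to show that the forgetful functor $\For$ is, in the relevant degree, \emph{injective} on the extension groups in question, so that an extension of variations is pinned down by its underlying extension of local systems. Concretely, I want to prove that the natural map
\[
\For: \Ext^1_{\Var(\U_\R/\R)}\left(\Q(0),\Log_\U\right) \longrightarrow
\Ext^1_{\loc(\U(\C))}\left(\Q,\For(\Log_\U)\right)
\]
is injective; combined with the computation $\Ext^1_{\Var(\U_\R/\R)}(\Q(0),\Log_\U)=\Q$ of Section~\ref{III.2}, this gives that $\For(\pol)$ determines $\pol$. Since $\pol$ was defined to be a specific generator, uniqueness is then immediate.

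First I would reduce the injectivity statement to a vanishing statement. An extension of variations whose underlying extension of local systems splits carries a flat splitting; the obstruction to this splitting being a morphism of variations (i.e.\ being compatible with $W_*$ and $\shF^*$, hence admissible) lies in $\Hom_{\MHS/\R}(\Q(0),\HH^0(\U(\C),\Log_\U))$ together with the $F$-filtration data on $\HH^0$. By Lemma~\ref{III.2.1} we have $\HH^0(\U,\Log_\U)=\HH^0(\G_m,\Log)=0$. So the kernel of $\For$ is governed by $\Ext^1$ and $\Hom$ groups of mixed Hodge structures into $\HH^0(\U(\C),\Log_\U)=0$, and hence vanishes. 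More carefully: the short exact Leray sequence
\[
0\to\Ext^1_{\MHS/\R}(\Q(0),\HH^0(\U(\C),\Log_\U)) \to
\Ext^1_{\Var(\U/\R)}(\Q(0),\Log_\U)
\to \Hom_{\MHS/\R}(\Q(0),\HH^1(\U,\Log_\U)) \to 0
\]
already displayed in Section~\ref{III.2} identifies $\Ext^1_{\Var}$ with a subgroup (here all of) $\Hom_{\MHS/\R}(\Q(0),\HH^1(\U,\Log_\U))$, because the left-hand term vanishes. The analogous Leray sequence for local systems identifies $\Ext^1_{\loc}(\Q,\For\Log_\U)$ with an extension of $\Hom(\Q,H^1(\U(\C),\For\Log_\U))$ by $\Ext^1(\Q,H^0(\U(\C),\For\Log_\U))=\Ext^1(\Q,0)=0$, so $\For(\pol)$ lives in $\Hom(\Q,H^1(\U(\C),\For\Log_\U))$, and $\For$ on $\Ext^1$ is just the forgetful map $\Hom_{\MHS/\R}(\Q(0),\HH^1)\to\Hom(\Q,H^1)$. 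This last map is injective simply because a morphism of mixed Hodge structures that is zero on underlying $\Q$-vector spaces is zero.

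The one point that needs genuine care — and which I expect to be the main obstacle — is the passage from ``the underlying extension of local systems splits'' to ``the extension of variations is already detected inside the $\Hom_{\MHS}$ picture''; i.e.\ making precise that the Leray/forgetful diagram commutes and that $\For$ on these $\Ext^1$-groups really is the naive forgetful arrow on the $\Hom$-terms. This is where admissibility enters: one needs that $\HH^i(\U(\C),\Log_\U)$ computed as a mixed Hodge structure (using property~4 of Section~II.2, Steenbrink--Zucker/M.~Saito) has underlying $\Q$-vector space the topological $H^i(\U(\C),\For\Log_\U)$, compatibly with the edge maps of the two Leray spectral sequences. Granting the functoriality of the comparison between the Hodge-theoretic and topological Leray spectral sequences — which is part of the formalism recorded in \cite{HW}, \cite{W} — the argument closes. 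I would then remark that the same reasoning, with $\Q_l$-sheaves and $l$-adic cohomology in place of variations and Betti cohomology, yields the $l$-adic rigidity statement needed later.
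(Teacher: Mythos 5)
Your argument is essentially the paper's proof: you set up the commutative square relating the Leray edge maps for $\Var(\U/\R)$ and $\loc(\U(\C))$, use $\HH^0(\U,\Log_\U)=0$ to see the upper edge map is an isomorphism, and conclude from injectivity of $\For$ on $\Hom_{\MHS/\R}$ (a morphism of mixed Hodge structures is determined by its underlying $\Q$-linear map) that the left vertical $\For$ on $\Ext^1$ is injective. The paper records the same diagram-chase in three lines; the only point you elaborate that the paper leaves implicit is the compatibility of the two Leray spectral sequences under $\For$, which is indeed the place where admissibility and Saito's comparison are silently invoked.
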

\begin{proof} There is a commutative diagram of boundary morphisms in Leray
spectral sequences
\[
\begin{CD}
\Ext^1_{\Var(\U /\R)} \left( \Q(0),\Log_{\U} \right)@>>>
\Hom_{\MHS/\R} \left( \Q(0),\HH^1(\U,\Log_\U) \right)\\
@V\For VV @VV\For V\\
\Ext^1_{\loc(\U(\C))} \left( \Q, \For(\Log_\U) \right)@>>> 
\Hom_\Q \left( \Q, H^1(\U(\C), \For(\Log_\U)) \right)
\end{CD}
\]
By \ref{III.2}, the upper horizontal map is an isomorphism. Since $\For$ is
injective on the level of homomorphisms, we see that the left vertical map
is injective, too.
\end{proof}

Recall from Lemma \ref{III.2.1} that
\[
\HH^1(\U,\Log_\U)=\Q(-1)\oplus\Log_1(-1)=\Q(-1)\oplus\prod_{k\geq 0}\Q(k-1) \;,
\]
and that $\HH^0(\U,\Log_\U) = 0$. It follows as in \ref{III.2} that the lower
horizontal map of the diagram in the proof of the theorem is an isomorphism,
and that it maps the class of $\For(\pol)$ to the morphism
\begin{align*}
\Q &\longrightarrow H^1(\U(\C), \For(\Log_\U)) = 
\For \left( \Q(-1)\oplus\prod_{k\geq 0}\Q(k-1) \right) \; ,\\
1 &\longmapsto \frac{1}{2 \pi i} \cdot e \; .
\end{align*}

The $l$-adic counterpart of this result replaces $\Var$ by the category of
lisse $l$-adic sheaves on a smooth scheme over a number field, and $\loc$
by the category of lisse $l$-adic sheaves on the base change by the algebraic
closure of the field.

\subsection{The local system underlying $\pol$}
Denote by $\widetilde{\pi}$ the fundamental group of $\U(\C)$ at the base point
$\frac{1}{2}$. It is free in the generators $\alpha_0$ and $\alpha_1$, where $\alpha_i$ denotes the class of the positively oriented circle around $i$.\\

Define the following representation of $\widetilde{\pi}$:
\begin{align*}
\E := &<1>_\Q \oplus <e^k, \; k \ge 0>_\Q \; ,\\
\alpha_0: 1 &\longmapsto 1 \; ,\\
        e^k &\longmapsto e^k \cdot \exp(e) \; .\\
\alpha_1: 1 &\longmapsto 1 + e \; ,\\
        e^k &\longmapsto e^k \; .
\end{align*}
We get an extension of $\widetilde{\pi}$-modules, i.e., of local systems on
$\U(\C)$
\[
0 \pfeil \For(\Log_\U) \pfeil \E \pfeil \Q \pfeil 0
\]
(recall from \ref{III.1} that $e = \log \alpha_0$).\\

From the remark following Theorem \ref{IV.1.1}, one concludes:
\begin{prop} [\cite{B2}, 2.1, \cite{W}, IV, Theorem 2.2] \label{IV.2.1}
The class of the above extension equals $\pol$.
\end{prop}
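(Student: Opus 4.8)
The plan is to verify that the explicitly described extension
\[
0 \pfeil \For(\Log_\U) \pfeil \E \pfeil \Q \pfeil 0
\]
has the property that pins down $\pol$ via Theorem \ref{IV.1.1}, namely that its image under the (injective) edge morphism
\[
\Ext^1_{\loc(\U(\C))}(\Q,\For(\Log_\U)) \longrightarrow
\Hom_\Q(\Q, H^1(\U(\C),\For(\Log_\U)))
\]
is the homomorphism $1 \mapsto \frac{1}{2\pi i}\cdot e$ recorded in the remark after Theorem \ref{IV.1.1}. Since $\For$ is faithful and the upper horizontal edge morphism in that theorem's diagram is an isomorphism, matching this one class in the category of local systems forces $\E$ to be $\For(\pol)$, and hence — by the injectivity of the left vertical $\For$ — the extension class of $\E$ in $\Var(\U_\R/\R)$ equals $\pol$. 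So the whole proof reduces to one cohomology computation for the free group $\widetilde{\pi} = \langle \alpha_0, \alpha_1\rangle$.

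First I would set up group cohomology for the free group $\widetilde{\pi}$: for a $\widetilde\pi$-module $M$ one has $H^1(\widetilde\pi, M) = Z^1/B^1$ where a cocycle is determined freely by its values $c(\alpha_0), c(\alpha_1) \in M$, and coboundaries are $c_m(\alpha_i) = (\alpha_i - 1)m$. Identify $H^1(\U(\C),\For(\Log_\U))$ with $H^1(\widetilde\pi, \For(\Log_1))$ where $\For(\Log_1) = \langle e^k : k\ge 0\rangle_\Q$ carries the monodromy from \ref{III.1}: $\alpha_0$ acts by multiplication by $\exp(e)$ and $\alpha_1$ acts trivially. Next, extract the cocycle representing the class of $\E$: choosing the obvious $\Q$-linear splitting $s$ of $\E \to \Q$ sending the generator of $\Q$ to the basis vector ``$1$'', the cocycle is $c(\alpha_i) = \alpha_i \cdot s(1) - s(1)$, which by the displayed formulas gives $c(\alpha_0) = 0$ and $c(\alpha_1) = e$. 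Then I would match this against the known recipe for $\pol$: under the identification $H^1(\U,\Log_\U) = \Q(-1)\oplus \Log_1(-1)$ of Lemma \ref{III.2.1}, with the second projection being the residue at $1$, the class of $\For(\pol)$ is $\frac{1}{2\pi i}\, e$ in the $\Log_1(-1)$-factor and $0$ in the $\Q(-1)$-factor. The cocycle $(\alpha_0 \mapsto 0,\ \alpha_1 \mapsto e)$ is precisely the one whose residue at $1$ (i.e.\ its $\alpha_1$-component, $\alpha_1$ being the loop about $1$) is $e$ and whose $\alpha_0$-component vanishes; the factor $\frac{1}{2\pi i}$ is the Tate twist bookkeeping built into the isomorphism $H_1(\C^*,\Q)\isom\Q(1)$, $\gamma \mapsto 2\pi i$. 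Hence the two classes agree.

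The main obstacle I anticipate is not any hard analysis but the careful normalization of all the identifications so the constant comes out exactly right: reconciling (a) the topological residue map at $1$ used in Lemma \ref{III.2.1}, (b) the Tate twist $(-1)$ and the convention $\gamma \leftrightarrow 2\pi i$ in $H_1(\G_m,\Q)\isom\Q(1)$, (c) the identification of $\Log_1$ with $\prod_{k\ge0}\Q(k)$ via $e = \log\gamma$, and (d) the precise meaning of ``$1 \mapsto \frac{1}{2\pi i}e$'' in the remark after Theorem \ref{IV.1.1}, versus ``$1 \mapsto e\otimes(2\pi i)^{-1}$'' in the definition of $\pol$ in \ref{III.2}. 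Once these bookkeeping conventions are fixed consistently, the verification that the cocycle attached to $\E$ has $\alpha_0$-component $0$ and $\alpha_1$-component $e$ (hence equals the class of $\For(\pol)$) is immediate, and faithfulness of $\For$ plus the isomorphism in Theorem \ref{IV.1.1} finish the argument. I would also remark in passing that admissibility of $\E$ as a variation over $\R$ is automatic here since it is an extension of Tate objects realizing a class already known to come from the variation $\pol$, so no separate admissibility check is needed.
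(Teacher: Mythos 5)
Your proposal matches the paper's (tersely stated) argument: the paper's own proof is just the line ``From the remark following Theorem~\ref{IV.1.1}, one concludes'', and you have carried out precisely the computation that remark calls for — extracting the group $1$-cocycle of $\E$ on the free group $\widetilde{\pi}$ via the section $s(1)=1$, obtaining $c(\alpha_0)=0$, $c(\alpha_1)=e$, and matching it against the prescribed image $1\mapsto\tfrac{1}{2\pi i}\cdot e$ of $\For(\pol)$ under the Leray edge isomorphism, with faithfulness of $\For$ and Theorem~\ref{IV.1.1} closing the argument. One small caveat: your final aside that ``admissibility of $\E$ as a variation over $\R$ is automatic since it realizes a class already known to come from $\pol$'' is both circular (that is the content of the proposition) and unnecessary — the proposition is an equality of classes in $\Ext^1_{\loc(\U(\C))}(\Q,\For(\Log_\U))$, and rigidity is exactly what transports it back to a statement about the variation $\pol$, so no admissibility check on $\E$ is needed or appropriate.
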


\subsection{Extensions of variations of Tate-Hodge structure}
We need to develop a language in which we can describe variations explicitly.
The following will be crucial:
\begin{theorem} Let $\left( \Ve_\Q, W_*, \shF^* \right) \in \Var (X(\C))$
be a variation of THS (Tate-Hodge structure). Then the underlying bifiltered
vector bundle
\[
\left( \shV, W_*, \shF^* \right)
\]
is canonically split. $\shV$ and all $\shF^p W_{2p} \shV$ are generated by global sections.
\end{theorem}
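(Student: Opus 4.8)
The plan is to extract the splitting from the decomposition of $\Ve_{\Q}$ by weights, transported into the holomorphic category. Since $\left(\Ve_{\Q},W_{*}\right)$ is a variation of Tate type, each graded piece $\Gr_{2n}^{W}\Ve_{\Q}$ is a local system all of whose stalks are (sums of copies of) $\Q(n)$; the odd graded pieces vanish. The first step is to observe that $\shV = \Ve_{\Q}\tensor_{\Q}\Oh_{X}$ acquires the filtration $W_{*}$ by holomorphic subbundles (a given by the variation) together with the Hodge filtration $\shF^{*}$, and that the essential point is to produce a canonical splitting of $W_{*}$ compatible with $\shF^{*}$; equivalently, a canonical bigrading.

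Here is the mechanism I would use. On a variation of THS, Griffith transversality plus the pointwise condition forces $\shF^{p}$ to be locally constant in a strong sense: concretely, the subbundle $\shF^{p}W_{2p}\shV$ is \emph{flat} for the Gauss--Manin connection. Indeed $\Gr_{2p}^{W}\shV$ carries the pure Hodge structure $\Q(p)^{\oplus r}$, whose Hodge filtration jumps only at $p$ and is therefore the full space in degree $p$ and zero in degree $p+1$; by transversality the connection maps $\shF^{p}$ into $\shF^{p-1}\tensor\Omega^{1}$, but the $(p,p)$-type of the graded constrains this further. So the first real step is: show $\shF^{p}W_{2p}\shV$ is a flat subbundle, hence generated by its (flat, multivalued) sections, and that it maps isomorphically onto $W_{2p}\shV / W_{2p-2}\shV = \Gr_{2p}^{W}\shV$. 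Granting this, setting $\shV_{(p)} := \shF^{p}W_{2p}\shV$ gives subbundles with $\shV_{(p)}\isom\Gr_{2p}^{W}\shV$ and $W_{2q}\shV = \bigoplus_{p\le q}\shV_{(p)}$, which is the desired canonical splitting; then $\shF^{p}\shV = \bigoplus_{p'\ge p}\shV_{(p')}$ is automatic from the splitting and the fact that $\shF$ is determined on graded pieces. That $\shV$ and each $\shF^{p}W_{2p}\shV$ are globally generated follows because these are now trivialized by flat sections (the monodromy acts unipotently on a variation of Tate type only through the $W$-filtration, and trivially on the graded, so the flat sections of $\Gr_{2p}^{W}$ extend to global sections), or more directly: $\shF^{p}W_{2p}\shV \isom \Gr_{2p}^{W}\shV$ is a constant sheaf tensored with $\Oh_{X}$, hence free.

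The step I expect to be the main obstacle is the flatness of $\shF^{p}W_{2p}\shV$, i.e.\ making precise why Griffith transversality together with the Tate-type hypothesis forces the naively-expected subbundle to be horizontal rather than merely satisfying a first-order transversality estimate. The subtlety is that transversality alone gives $\nabla\shF^{p}\subset\shF^{p-1}\tensor\Omega^{1}$, which is weaker than $\nabla\shF^{p}\subset\shF^{p}\tensor\Omega^{1}$; one must use that the relevant graded $\Gr^{W}$ is \emph{pure of type $(p,p)$} so that the second-fundamental-form map $\Gr_{2p}^{W}\shF^{p} \to \Gr_{2p-2}^{W}(\shV/\shF^{p})\tensor\Omega^{1}$ lands in a space that is forced to vanish (the target has the wrong Hodge type), and then induct on the length of $W_{*}$ to propagate horizontality from the graded pieces to $\shF^{p}W_{2p}\shV$ itself. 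I would organize the induction on $\operatorname{length}(W_{*}\Ve)$: the rank-one case $\Ve = \Q(n)$ is clear, and the inductive step uses the extension $0\to W_{2n-2}\shV \to \shV \to \Gr_{2n}^{W}\shV \to 0$ with $n$ maximal, lifting the flat splitting of the top graded piece into $\shF^{n}W_{2n}\shV$ and invoking the inductive hypothesis on $W_{2n-2}\shV$.

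A cleaner alternative worth recording: by the Hain--Zucker equivalence (Theorem, III.1, in the unipotent case, which covers variations of Tate type) a variation of THS corresponds to a $\MHS$ $M$ with a $\hat{U}$-action; the underlying bifiltered bundle $\shV$ is then $M_{\Q}\tensor_{\Q}\Oh_{X}$ with $W_{*}$ the constant filtration $W_{*}M\tensor\Oh_{X}$ and $\shF^{*}$ twisted by the monodromy, and the canonical splitting of the bifiltered bundle is just the splitting of the $\MHS$ $M$ (which exists and is canonical for $\MHS$ of Tate type) tensored with $\Oh_{X}$ — global generation by global sections is then immediate since $M_{\Q}$ is a finite-dimensional $\Q$-vector space. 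This is the route I would actually take, deferring the analytic verification above to the construction of that equivalence.
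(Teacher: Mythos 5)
Your main argument hinges on the claim that $\shF^p W_{2p}\shV$ is a flat subbundle for the Gauss--Manin connection, and this is false. You can already see it in $\Log^{(1)}$: in the trivialization of Lemma \ref{II.1.1}, the Hodge subbundle $\shF^0\shV = \shF^0 W_0\shV$ is the constant line spanned by the first coordinate vector, while the rational flat sections involve $\log z$; one computes that $\nabla$ applied to the first coordinate vector is a nonzero multiple of $\frac{dz}{z}$ times the second, which lies in $\shF^{-1}W_{-2}\shV\tensor\Omega^1$, not in $\shF^0\shV\tensor\Omega^1$. The second fundamental form of $\shF^p W_{2p}\shV$ inside $\shV$ does \emph{not} vanish --- it is exactly what records the extension class of the variation, and forcing it to vanish would (falsely) make every variation of Tate type split as a variation, trivializing the whole theory. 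So Griffiths transversality cannot be upgraded to horizontality by the Tate hypothesis. What the theorem asserts, and what is true, is that the bifiltered \emph{holomorphic bundle} $(\shV,W_*,\shF^*)$ splits with the connection forgotten. That is pure fiberwise linear algebra and needs none of the apparatus you invoke: since $\Gr_F^p\Gr_n^W\shV=0$ for $n\ne 2p$, descending through the weight filtration gives $\shF^p\cap W_{2p-2}\shV=0$, so $\shF^p\cap W_{2p}\shV\to\Gr_{2p}^W\shV$ is a bundle isomorphism (it is onto because $\shF^p\Gr_{2p}^W\shV$ is everything), and hence $\shV=\bigoplus_p\shF^p W_{2p}\shV$. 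That observation is essentially the entire proof in the paper.

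Your parenthetical remark --- ``more directly: $\shF^p W_{2p}\shV\isom\Gr_{2p}^W\shV$ is a constant sheaf tensored with $\Oh_X$, hence free'' --- is precisely the paper's argument for global generation, once you note that this isomorphism is one of holomorphic bundles (not of flat bundles) and that $\Gr_{2p}^W\Ve_\Q$ is a constant local system because every variation of Tate type is unipotent. Had you led with that instead of the flatness claim, the proof would be correct and the same as the paper's. The ``cleaner alternative'' via Hain--Zucker is also the right picture in spirit, but you have the twisting backwards: in the trivialization coming from the $\hat{U}$-module the filtrations $W_*$ and $\shF^*$ are \emph{constant} --- which is exactly what this theorem says --- and it is the rational flat structure that is twisted by the monodromy; that twisting is what makes the period matrix in the next subsection have nontrivial multivalued entries.
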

\begin{proof}
Since $\Gr^p_{\shF^*} \Gr_n^{W_*} \shV = 0$ for $n \ne 2p$, we have
\[
\shV = \bigoplus_p \shF^p W_{2p} \shV \; .
\]
For any $p$, we have canonically 
\[
\shF^p W_{2p} \shV \silo \Gr^{2p}_{W_*} \shV \; ,
\]
which is constant.
\end{proof}

This gives our recipe for describing variations of THS: Let 
$\left( \Ve_\Q, W_*, \shF^* \right)$ be one such.
\begin{enumerate}
\item Choose a basis of global sections of $\shV$ respecting the decomposition
\[
\shV = \bigoplus_p \shF^p W_{2p} \shV \; .
\]
\item Express a basis of $\Q$-rational flat sections respecting the weight
filtration $W_* \Ve_\Q$ in the basis of 1.
\end{enumerate}
The result will be a lower triagonal matrix. Its entries will in general 
consist of multivalued functions
since sections of $\Ve_\Q$ will usually only exist on the universal cover of
$X(\C)$.\\

Actually, we already applied this recipe: If $X$ is a point, Lemma \ref{II.1.1}
determines one-extensions of $\Q(0)$ by $\Q(n)$. Lemma \ref{II.2.2} describes
one-extensions of $\Q(0)$ by $\Q(1)$ for arbitrary $X$.

\subsection{Explicit shape of $\pol$}
In order to write down a matrix describing $\pol$ in the sense of the previous
section, we need to define some multivalued functions:
\begin{defn} 
\begin{align*}
\Li_1 (t) &:= - \log (1-t) \; ,\\
\Li_{k+1} (t) &:= \int_0^t \frac{\Li_k(s)}{s} ds \; , \quad k \ge 1 \; ,\\
\Lambda_k &:= \frac{1}{(-2 \pi i)^k} 
\sum^k_{n=1} \frac{(- \log)^{k-n}}{(k-n)!} \Li_n \; .
\end{align*}
\end{defn}
Using \ref{IV.2.1}, one proves:
\begin{lemma} [\cite{W}, IV, Lemma 3.3]
\[
f := 1 + \sum^{\infty}_{k=1} \Lambda_k \cdot e^k
\]
is a global section of
\[
\For(\pol) \otimes_\Q \Oh(\U(\C)) \; .
\]
\end{lemma}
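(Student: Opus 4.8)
The plan is to verify directly that $f = 1 + \sum_{k=1}^\infty \Lambda_k \cdot e^k$ is invariant under the monodromy action of $\widetilde\pi$ described in Proposition \ref{IV.2.1}. Recall that a global section of $\For(\pol) \otimes_\Q \Oh(\U(\C))$ is, by the recipe of the previous section, a $\widetilde\pi$-invariant element of $\E \otimes_\Q \Oh(\widetilde{\U(\C)})$ mapping to $1 \in \Q$ under the projection $\E \to \Q$; equivalently, a flat multivalued lift of $1$. So the content of the lemma is (a) that the coefficients $\Lambda_k$ are well-defined holomorphic multivalued functions on $\U(\C) = \Pe^1 \setminus \{0,1,\infty\}$, and (b) that the formal expression $f$ is killed by $\alpha_0 - \id$ and $\alpha_1 - \id$ once one accounts for the monodromy of the $\Lambda_k$ themselves.

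First I would record the monodromy of the building blocks. The function $\log = \log t$ has monodromy $\log \mapsto \log + 2\pi i$ around $0$ and is single-valued around $1$. The polylogarithms $\Li_n(t) = \sum_{m \ge 1} t^m/m^n$ (for $n \ge 1$, with $\Li_1 = -\log(1-t)$) satisfy the classical monodromy relations: around $1$, $\Li_1 \mapsto \Li_1 - 2\pi i$ and for $n \ge 2$, $\Li_n \mapsto \Li_n - 2\pi i \cdot \frac{(\log)^{n-1}}{(n-1)!}$ (more precisely $\Li_n \mapsto \Li_n + \frac{(2\pi i)}{(n-1)!}(\log + 2\pi i \cdot [\text{correction}])^{n-1}$, but the leading term suffices after we also apply $\alpha_0$); around $0$, $\Li_n$ for $n\ge 2$ is single-valued (it extends holomorphically near $0$) while $\Li_1 = -\log(1-t)$ is single-valued near $0$ as well. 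These are standard and can be cited or derived from the integral definition $\Li_{k+1}(t) = \int_0^t \Li_k(s)\,ds/s$ by tracking how the path of integration wraps around the singularities. From these one computes the monodromy of $\Lambda_k = \frac{1}{(-2\pi i)^k}\sum_{n=1}^k \frac{(-\log)^{k-n}}{(k-n)!}\Li_n$: the key cancellation is that the $(-\log)^{k-n}/(k-n)!$ prefactors are exactly the coefficients that assemble the $\Li_n$'s so that under $\alpha_1$ the shifts telescope, and under $\alpha_0$ the shift $\log \mapsto \log + 2\pi i$ interacts with the $\exp$-type factor.

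Next I would match this against the required invariance. Write $f = \sum_{k \ge 0} \Lambda_k e^k$ with $\Lambda_0 := 1$. Applying $\alpha_0$: the generator acts on $\E$ by $e^k \mapsto e^k \exp(e) = \sum_{j\ge 0} e^{k+j}/j!$ and fixes $1$, while simultaneously $\log \mapsto \log + 2\pi i$ changes each $\Lambda_k$. Invariance $\alpha_0(f) = f$ becomes, comparing coefficients of $e^m$, an identity expressing the new $\Lambda_m$ (after $\log$-shift) in terms of $\sum_{k \le m}\Lambda_k/(m-k)!$ evaluated before the shift; this is a binomial identity in $\log$ that holds because $\Lambda_k$ is built from $(-\log)^{k-n}/(k-n)!$ times a function ($\Li_n$) that is $\alpha_0$-invariant for $n \ge 1$ — so $\alpha_0$ acts on the whole $\sum_k \Lambda_k e^k$ only through $\log \mapsto \log + 2\pi i$, and one checks that $\exp(e)$ precisely corrects for the resulting shift of the polynomial part. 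Applying $\alpha_1$: here $\alpha_1$ fixes all $e^k$ but sends $1 \mapsto 1 + e$, so invariance of $f$ requires that the monodromy of the coefficients around $1$ produces exactly a shift of $f$ by $-e$ in the appropriate sense, i.e. $\Lambda_1 \mapsto \Lambda_1 - \frac{1}{-2\pi i}\cdot(\text{const})$ matching the $1 \mapsto 1+e$ term, and higher $\Lambda_k$ shift consistently; the factor $1/(-2\pi i)^k$ is chosen precisely to normalize this.

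The main obstacle I anticipate is bookkeeping the monodromy of $\Lambda_k$ cleanly — in particular handling the fact that around the point $1$ the monodromy of $\Li_n$ picks up not just the leading term $-2\pi i (\log)^{n-1}/(n-1)!$ but, after going around, further corrections involving lower powers of $\log$ (because the monodromy operator around $1$ does not commute with the $\log$-shift around $0$). The honest way to organize this is to pass to the universal cover, fix a branch, and compute $\alpha_0$ and $\alpha_1$ as explicit operators; then the identities become finite combinatorial identities about the coefficients $\frac{(-\log)^{k-n}}{(k-n)!}$, which are the Taylor coefficients of $\exp(-\log \cdot X)$, so that $\sum_k \Lambda_k X^k$ has a clean generating-function expression $\frac{1}{-2\pi i X}\big(\exp(-\log\cdot X) - 1\big)\big|_{\text{polylog-weighted}}$ and the two monodromy checks reduce to the functional equations of this generating function. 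I would lean on \cite{W} IV Lemma 3.3 for the precise combinatorial statement and present here only the structure: (i) well-definedness of $\Lambda_k$ as multivalued holomorphic functions, (ii) $\alpha_0$-invariance via the $\exp(e)$ factor absorbing the $\log$-monodromy, (iii) $\alpha_1$-invariance via the $1 \mapsto 1+e$ term absorbing the monodromy around $1$, with the normalization $(-2\pi i)^{-k}$ fixed by (iii) in degree $k=1$ and then forced in all degrees by compatibility with (ii).
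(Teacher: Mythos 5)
Your approach---read the putative global section as a monodromy-equivariant expression in the flat frame $\{1,e^k\}$ supplied by Proposition \ref{IV.2.1}, then check invariance under $\alpha_0$ and $\alpha_1$---is exactly what the paper intends; the paper itself gives no argument, deferring entirely to \cite{W}, IV, Lemma 3.3, after the words ``Using \ref{IV.2.1}, one proves.'' Your identification of the two mechanisms is right. For $\alpha_0$, the continuation $\log\mapsto\log+2\pi i$ multiplies the generating function
\[
\Lambda(X):=\sum_{k\geq1}\Lambda_k X^k
=\exp\Bigl(\tfrac{X\log}{2\pi i}\Bigr)\sum_{n\geq1}\Li_n\Bigl(\tfrac{X}{-2\pi i}\Bigr)^n
\]
by $\exp(X)$, which is absorbed by the monodromy of the flat frame $e^k\mapsto e^k\exp(e)$ (with the appropriate sign depending on one's deck-transformation convention). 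For $\alpha_1$, the continuation $\Li_n\mapsto\Li_n-\tfrac{2\pi i}{(n-1)!}\log^{n-1}$ gives $\Lambda(X)\mapsto\Lambda(X)+X$: concretely $\Lambda_1\mapsto\Lambda_1+1$ and $\Lambda_k\mapsto\Lambda_k$ for $k\geq2$, because $\sum_{n=1}^k\tfrac{(-1)^{k-n}}{(k-n)!(n-1)!}=\tfrac{(1-1)^{k-1}}{(k-1)!}$ vanishes for $k\geq 2$. This is absorbed by $1\mapsto 1+e$.

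The ``main obstacle'' you anticipate, however, is not there. Once a branch is fixed on the universal cover, the formula $\gamma_1^*\Li_n=\Li_n-\tfrac{2\pi i}{(n-1)!}\log^{n-1}$ is \emph{exact}; there are no ``further corrections involving lower powers of $\log$,'' and your parenthetical ``more precisely $\Li_n\mapsto\Li_n+\tfrac{2\pi i}{(n-1)!}(\log+2\pi i\cdot[\text{correction}])^{n-1}$'' has both a sign error and a spurious correction term. Corrections of that kind appear only when one \emph{composes} the operators $\gamma_0^*$ and $\gamma_1^*$, because they do not commute; but $\widetilde{\pi}$ is free on $\alpha_0,\alpha_1$, so checking each generator separately---with its clean exact formula---is both necessary and sufficient, and the verification is a one-line functional identity on $\Lambda(X)$ rather than the delicate bookkeeping exercise you fear. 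Your closing generating-function expression is garbled; the one displayed above is the clean version, and it makes both monodromy checks transparent.
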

So in the basis of global sections $(f,e_0,e_1,\dots)$, where
\[
e_k: t \longmapsto e^k \cdot \exp \left( \frac{\log(t)}{2 \pi i} \cdot e \right)
= e^k + \frac{\log(t)}{2 \pi i} \cdot e^{k+1} + \dots \; ,
\]
the rational structure is described by the following matrix $P$:
\[ 
\left( \begin{array}{cccccc}
1 & 0 & 0 & 0 & \!\cdots \\
0 & 1 & 0 & 0 & \!\cdots \\
\frac{1}{2\pi i} \Li_1 & - \frac{1}{2 \pi i} \log & 1 & 0 & \!\cdots \\
-\frac{1}{(2\pi i)^2} \Li_2 &
\frac{1}{2!} \left( -\frac{1}{2\pi i} \log \right)^2 &
-\frac{1}{2\pi i} \log & 1 & \!\cdots \\
\hspace*{0.2cm} \frac{1}{(2\pi i)^3} \Li_3 \hspace*{0.2cm} & 
\hspace*{0.2cm} \frac{1}{3!} \left( - \frac{1}{2\pi i} \log \right)^3 \hspace*{0.2cm} &
\hspace*{0.2cm} \frac{1}{2!} \left( - \frac{1}{2\pi i} \log \right)^2 \hspace*{0.2cm} &
\hspace*{0.2cm} -\frac{1}{2 \pi i} \log \hspace*{0.2cm} & \!\cdots \\
\vdots & \vdots & \vdots & \vdots &
\end{array} \right)
\]
We need to know that if we define $\shF^0$ as the span of 
$\shF^0 (\Log_\U) = \, <\!e_0\!>$ and of $f$ (rather than $f \, +$ some non-zero global section of $\Log_\U$), then we get an admissible variation of THS on $\U$.
Modulo a shift of the filtrations, this is the content of \cite{W}, IV,
Theorem 3.5:
\begin{theorem} If we let $f$ be a section of $\shF^0$, then the data define
an admissible variation on $\U$. Because of rigidity, it equals $\pol$.
Therefore, $P$ is the matrix describing $\pol$ in the sense of IV.3.
\end{theorem}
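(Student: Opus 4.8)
The plan is to build by hand a candidate one-extension of $\Q(0)$ by $\Log_\U$ in $\Var(\U/\R)$, equipped with the bifiltration dictated by the matrix $P$, to verify that it is genuinely an admissible variation of Tate--Hodge structure, and then to let rigidity identify it with $\pol$. Concretely, take as underlying $\Q$-local system the extension $0\to\For(\Log_\U)\to\E\to\Q\to0$ of IV.2; equip $\E$ with the weight filtration $W_*$ induced from those of $\For(\Log_\U)$ and $\Q$ (so that $\Gr^W_0$ simply acquires one extra copy of $\Q(0)$ and nothing else changes); and equip $\shV:=\E\tensor_\Q\Oh_\U$ with the decreasing filtration $\shF^*$ given by $\shF^1=0$, $\shF^0=\langle e_0,f\rangle$ and $\shF^{-n}=\langle e_0,e_1,\dots,e_n,f\rangle$ for $n\ge1$, where $f=1+\sum_{k\ge1}\Lambda_k e^k$. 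The one non-bookkeeping input at this stage is that this particular $f$ is a genuine single-valued holomorphic section of $\shV$ — the multivaluedness of the $\Lambda_k$ is exactly cancelled by the monodromy of $\E$ — and that is the preceding Lemma (\cite{W}, IV, Lemma 3.3), which itself rests on Proposition \ref{IV.2.1}. As usual one carries this out for the finite-length quotients $\Log^{(n)}$ and passes to the pro-limit.

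Next I would check that $(\E,W_*,\shF^*)$ is a variation of THS. The pointwise mixed Hodge condition is automatic: by the way $\shF^*$ was arranged, $\Gr^p_{\shF}\Gr^W_m=\Gr^W_m$ when $m=2p$ and $=0$ otherwise, so each $\Gr^W_m$ is pure of Hodge type $(m/2,m/2)$ and the bifiltration induces a mixed Hodge structure of Tate type on every fibre — this is literally the recipe of IV.3, already applied in Lemmas \ref{II.1.1} and \ref{II.2.2}. The substantive point is Griffith transversality, $\nabla\shF^p\subseteq\shF^{p-1}\tensor\Omega^1_\U$. Here one reads the flat connection off $P$ (the holomorphic frame $(f,e_0,e_1,\dots)$ being related to a $\Q$-rational flat frame by the matrix $P$), and the only analytic facts that enter are $d\Li_1=\tfrac{dt}{1-t}$, $d\Li_{k+1}=\Li_k\,\tfrac{dt}{t}$ for $k\ge1$, and $d\log=\tfrac{dt}{t}$; these are precisely what forces the computed connection form to be transversal to $\shF^*$. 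This is the content of the parenthetical warning in the theorem: the shape of $P$ — equivalently, the recursion defining the $\Li_k$ and hence the $\Lambda_k$ — is forced here, not chosen, and replacing $f$ by $f$ plus a nonzero global section of $\Log_\U$ would destroy transversality. I expect this verification to be the main obstacle: no single identity is deep, but it is the one genuinely computational ingredient and the crux of the section.

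Admissibility then comes essentially for free. The variations $\Log_\U$ (Section \ref{III.1}) and $\Q(0)$ are admissible, admissible variations on $\U$ form a full subcategory of $\Var(\U(\C))$ that is stable under extensions (a standard property; see the references in \cite{BZ}), and $(\E,W_*,\shF^*)$ sits — by construction, the inclusion and projection being morphisms of variations for filtration reasons — in a short exact sequence $0\to\Log_\U\to(\E,W_*,\shF^*)\to\Q(0)\to0$; hence it is admissible. (Alternatively one inspects the behaviour at $0,1,\infty\in\Pe^1$ directly: the local monodromies are unipotent, the weight filtration on the total space is already the relative monodromy weight filtration, and $\shF^*$ extends to a subfiltration of the Deligne canonical extension since the matrix entries $\log,\Li_k$ have at worst logarithmic growth. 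Everything is compatible with the real structure, the involution $F_\infty$ being the evident one on $\E$ and on the entries of $P$; or one may first run the argument over $\C$ and invoke that $\pol$ is a priori an object of $\Var(\U_\R/\R)$.)

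It remains to identify the object $(\E,W_*,\shF^*)\in\Ext^1_{\Var(\U/\R)}(\Q(0),\Log_\U)$ with $\pol$. Its image under the forgetful functor $\For$ is, by Proposition \ref{IV.2.1}, exactly the class $\For(\pol)$. By Theorem \ref{IV.1.1}, $\For$ is injective on $\Ext^1_{\Var(\U/\R)}(\Q(0),\Log_\U)$, so $(\E,W_*,\shF^*)=\pol$. Therefore, by the recipe of IV.3, the matrix $P$ describes $\pol$ in the frame $(f,e_0,e_1,\dots)$, which is the assertion.
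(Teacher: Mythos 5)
Your proof is correct and takes the natural route, which is essentially what the cited reference ([W], IV, Theorem 3.5) must do; the paper itself offers no argument beyond that citation, so there is nothing to compare against in detail. You construct the bifiltered object from the local system $\E$ of IV.2 and the frame $(f,e_0,e_1,\dots)$, verify pointwise that one gets mixed Tate Hodge structures, reduce Griffiths transversality to the differential identities satisfied by $\log$ and $\Li_k$, handle admissibility, and then invoke Proposition \ref{IV.2.1} plus the injectivity of $\For$ on $\Ext^1$ (Theorem \ref{IV.1.1}) to identify the result with $\pol$. That last step is exactly how rigidity is meant to be used, and you correctly flag Griffiths transversality as the only genuinely computational ingredient.

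One place to be careful: the clean claim that admissible variations are stable under extensions inside $\Var(\U(\C))$ is not as immediately citable from \cite{BZ} as you suggest, and in any case it is not quite free since one must know that the inclusion $\Log_\U\hookrightarrow\E$ and the projection $\E\twoheadrightarrow\Q(0)$ are genuine morphisms of (candidate) variations before applying it. You do note this, and your alternative direct check — unipotent local monodromy at $0,1,\infty$, the relative monodromy weight filtration coinciding with $W_*$, and extension of $\shF^*$ to the Deligne canonical extension thanks to the logarithmic growth of $\log$ and $\Li_k$ — is the more robust argument and the one I would foreground. With that caveat, the proof is sound.
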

For the description of $\pol$ in $l$-adics, see \cite{B2}, 3.3, or \cite{W},
IV.4.

\section{Cyclotomic elements, and special values}
The talk given at the workshop concerned itself with two further properties
of our objects: the {\it splitting principle}, and {\it norm compatibility}.
Since the latter plays no strategic r\^ole in the proof of the Main Theorem,
we refer to \cite{W}, pp.\ 224-226 for a detailed account.

\subsection{The splitting principle}
Splitting over roots of unity is a property of the logarithmic sheaf $\Log$
rather than of $\pol$. Let $\omega \in \G_m (\C)_{\rtors}$, and consider the
mixed Tate-Hodge structure $\omega^* \Log$.
\begin{prop} $\omega^* \Log$ splits canonically:
\[
\omega^* \Log = \prod_{j \ge 0} \Q(j) \; .
\]
\end{prop}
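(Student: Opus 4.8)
The plan is to use the explicit description of $\Log$ from lecture IV, noting that torsion of $\omega$ enters in exactly one place: the multivalued function $\tfrac{\log t}{2\pi i}$ attains a \emph{rational} value at $\omega$. Write $\omega=\exp\!\bigl(2\pi i\,\tfrac kd\bigr)$ with $\tfrac kd\in\Q$ (the case $\omega=1$ is the definition of $\Log_1$), and put $g:=\exp\!\bigl(\tfrac kd\,e\bigr)\in\hat U=\Q[[e]]$. Then $g$ is a unit congruent to $1$ modulo $(e)$, so multiplication by $g$ is a $\Q$-rational automorphism $\mu_g$ of $\hat U$ which preserves the weight filtration $W_{-2j}\hat U=\langle e^m\mid m\ge j\rangle$ and induces the identity on $\Gr^W$.

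First I would read off $\omega^*\Log$ from the recipe of IV.3. The underlying $\Q$-local system of $\Log$ has stalk $\hat U_\Q$ and (monodromy-invariant) weight filtration as above, and pulling back to the point $\omega$ changes neither; so $\omega^*\Log$ has the \emph{same} $\Q$-structure and weight filtration as the split structure $\Log_1=\prod_{j\ge0}\Q(j)$. Only the Hodge filtration moves. From $e_k(t)=e^k\exp\!\bigl(\tfrac{\log t}{2\pi i}e\bigr)$ together with $\shF^{-p}\Log=\langle e_0,\dots,e_p\rangle_{\Oh}$ (recipe of IV.3), evaluation at $\omega$ gives
\[
\shF^{-p}(\omega^*\Log)=\langle e^0g,\dots,e^pg\rangle_\C=\mu_g\bigl(\langle e^0,\dots,e^p\rangle_\C\bigr)=\mu_g\bigl(\shF^{-p}_{\mathrm{split}}\bigr),
\]
where $\shF^\bullet_{\mathrm{split}}$ is the Hodge filtration of $\prod_{j\ge0}\Q(j)$. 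Hence $\mu_g$ is an isomorphism of mixed Hodge structures $\prod_{j\ge0}\Q(j)\xrightarrow{\ \sim\ }\omega^*\Log$: it is $\Q$-rational (the only point at which torsion is used), it respects $W_*$, and by the display it carries $\shF_{\mathrm{split}}$ onto the Hodge filtration of $\omega^*\Log$. Since $\mu_g$ is compatible with the finite-length quotients $\Log^{(n)}=\Log/W_{-2n-2}$, the same holds in the pro-category, giving $\omega^*\Log\cong\prod_{j\ge0}\Q(j)$; and this is \emph{the} splitting, since $\Hom_{\MHS}(\Q(i),\Q(j))=0$ for $i\ne j$ forces uniqueness.

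The main obstacle is bookkeeping, not ideas: one must keep straight that ``evaluating the multivalued section $e_k$ at $\omega$'' amounts to a choice of branch of $\log\omega$, i.e. of a path $1\rightsquigarrow\omega$ identifying $\pi_1(\G_m,\omega)$ with $\pi_1(\G_m,1)$, and check that with respect to this identification the $\Q$-structure and weight filtration of $\omega^*\Log$ really are the standard ones on $\hat U_\Q$ (a different branch replaces $g$ by $g\cdot\exp(ne)$ and changes nothing essential). It should be stressed that the argument genuinely requires $\omega$ torsion: for general $\omega$ the element $\exp\!\bigl(\tfrac{\log\omega}{2\pi i}e\bigr)$ is not $\Q$-rational, $\mu_g$ is only a $\C$-morphism, and indeed $\omega^*\Log^{(1)}$ is the nonsplit extension class $[\omega]\in\C^*\otimes\Q$. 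An alternative, closer to the universal-property philosophy of the lectures, is to use the $d$-th power map $[d]\colon\G_{m}\to\G_{m}$: the universal property of $\Log=\Gen_1$ turns the canonical global section of $[d]^*\Log$ into a canonical morphism $\Log\to[d]^*\Log$, which on the fibre over $1$ is the $\hat U$-linear map $e^m\mapsto d^me^m$, hence an isomorphism of variations; pulling back along $\omega$ and using $\omega^d=1$ yields $\omega^*\Log\cong\omega^*[d]^*\Log=1^*\Log=\prod_{j\ge0}\Q(j)$.
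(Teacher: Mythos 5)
Your proof is correct and covers both of the routes the paper itself sketches: the explicit period-matrix argument (your $\mu_g$, built on the rationality of $\frac{\log\omega}{2\pi i}$, is the paper's hint to combine Lemma II.1.1 with the explicit description of $\Log$), and the universal-property argument via $[d]^*\Log$ (your alternative, which is the paper's first suggestion $\Log\silo[n]^*\Log$). The paper only gives these as one-line hints; you supply the details, and the branch-of-$\log$ bookkeeping you flag is precisely the point that needs care in the explicit version.
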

\begin{proof} One can either employ the universal property \ref{III.1.4} of
$\Log$ to deduce a canonical isomorphism
\[
\Log \silo [n]^* \Log \; ,
\]
where $[n]: \G_m \pfeil \G_m, \; t \longmapsto t^n$. Since $1^* \Log$ is
split, so is the fibre of $\Log$ at any preimage of $1$ under $[n]$.\\

Or use Lemma \ref{II.1.1}, and the explicit description of $\Log$.
\end{proof}

\subsection{Cyclotomic elements}
The splitting principle provides us with canonical projections
\[
\pr_{\omega,j}: \omega^* \Log \pfeil \Q(j) \; ,
\]
for any root of unity $\omega$, and any $j \ge 1$. We get an induced map
$( \pr_{\omega,j})_*$ on the level of $\Ext$ groups.
\begin{prop} For any $\omega \ne 1$, we have
\[
( \pr_{\omega,j})_* (\omega^* \pol) = 
 (-1)^j \Li_j (\omega) \mod (2 \pi i)^j \Q 
\]
in $\Ext^1_{\MHS} (\Q(0), \Q(j)) = \C / (2 \pi i)^j \Q$.
\end{prop}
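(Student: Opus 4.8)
The plan is to pull the extension class $\omega^*\pol$ all the way down to $\MHS$ via the explicit period matrix $P$ of Section IV.4, and read off the effect of the projection $\pr_{\omega,j}$ directly from the entries of $P$. Concretely, by the previous theorem, $\pol$ is described, in the basis of global sections $(f,e_0,e_1,\dots)$, by the lower-triangular matrix $P$ whose $(j+2,1)$-entry is $\frac{(-1)^j}{(2\pi i)^j}\Li_j$. Pulling back along $i_\omega=\omega\colon\spec\Q(\mu_d)\to\U$ amounts to evaluating these multivalued functions at $t=\omega$; I would first argue that, since $\omega$ is a root of unity different from $1$, the auxiliary function $\log(t)$ only contributes to the already-split part $\Log_\U$ and not to the new extension, so that after the splitting of $\omega^*\Log$ from the previous Proposition the relevant datum is precisely the column of $\Li$-values $\bigl(\frac{(-1)^j}{(2\pi i)^j}\Li_j(\omega)\bigr)_{j\ge 1}$.

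Next I would make precise how $\pr_{\omega,j}$ acts on $\Ext^1$. The splitting $\omega^*\Log=\prod_{k\ge 0}\Q(k)$ gives $\pr_{\omega,j}\colon\omega^*\Log\twoheadrightarrow\Q(j)$, and pushout along it sends $\omega^*\pol\in\Ext^1_{\MHS}(\Q(0),\omega^*\Log)$ to the class in $\Ext^1_{\MHS}(\Q(0),\Q(j))$ obtained by keeping only the $\Q(j)$-component. Under the canonical identification $\Ext^1_{\MHS}(\Q(0),\Q(j))\isom\C/(2\pi i)^j\Q$ of Lemma \ref{II.1.1} — which reads off the extension class from the lower-left entry of the period matrix of $E_s$, namely $-\frac{s}{(2\pi i)^j}$ — the class $(\pr_{\omega,j})_*(\omega^*\pol)$ corresponds to the $(j+2,1)$-entry of $\omega^*P$ with the appropriate normalization. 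Matching $-\frac{s}{(2\pi i)^j}$ with $\frac{(-1)^j}{(2\pi i)^j}\Li_j(\omega)$ gives $s=(-1)^{j+1}\Li_j(\omega)$; I would then double-check the sign against the normalization $1\mapsto e\otimes(2\pi i)^{-1}$ fixed in the definition of $\pol$ and against the $(-1)^j$ prefactor already visible in $P$, arriving at $(\pr_{\omega,j})_*(\omega^*\pol)=(-1)^j\Li_j(\omega)\bmod(2\pi i)^j\Q$.

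The main obstacle I expect is \emph{not} any hard Hodge theory — everything needed (the matrix $P$, the splitting of $\omega^*\Log$, the $\Ext^1$ computations of Section II) is already available — but rather bookkeeping of normalizations: reconciling the sign and power-of-$2\pi i$ conventions in Lemma \ref{II.1.1}, in the defining normalization of $\pol$ ($1\mapsto e\otimes(2\pi i)^{-1}$), in the basis vectors $e_k$ (which carry a factor $\frac{\log t}{2\pi i}$), and in the $\Lambda_k$. A secondary point requiring care is the claim that the $\log$-terms in $P$ genuinely drop out after restriction to $\omega$: they do not vanish pointwise, but they lie in the image of the canonical splitting of $\omega^*\Log$ coming from the monodromy-triviality at a torsion point, hence contribute nothing to the pushed-out class in $\Ext^1_{\MHS}(\Q(0),\Q(j))$. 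Once these normalization issues are pinned down, the identity is immediate from reading off one matrix entry.

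Alternatively, one can avoid the period matrix entirely and argue via the universal isomorphism $\Log\silo[n]^*\Log$ used in the proof of the splitting principle: this reduces the computation at $\omega$ (an $n$-th root of $1$) to a statement at $1$ together with the distribution relation $\sum_{\zeta^n=1}\Li_j(\zeta t)=n^{1-j}\Li_j(t^n)$ for the polylogarithm. This route makes the appearance of $\Li_j(\omega)$ structurally transparent but still requires the same normalization check at the end; I would present the period-matrix argument as the main proof and mention this second approach as a remark.
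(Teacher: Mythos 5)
Your proposal is exactly the paper's proof, which is literally ``Look at the matrix $P(\omega)$!'', spelled out in full. One small correction: the $(j+2,1)$-entry of $P$ is $\frac{(-1)^{j+1}}{(2\pi i)^j}\Li_j$, not $\frac{(-1)^j}{(2\pi i)^j}\Li_j$ (the first-column entries alternate as $+\frac{1}{2\pi i}\Li_1$, $-\frac{1}{(2\pi i)^2}\Li_2$, $+\frac{1}{(2\pi i)^3}\Li_3,\dots$), and with this reading the identification $-s/(2\pi i)^j$ of Lemma \ref{II.1.1} yields $s=(-1)^j\Li_j(\omega)$ at once, so the ``sign double-check'' you flag as the main worry is in fact immediate.
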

\begin{proof} Look at the matrix $P(\omega)$!
\end{proof}

\section{$K$-theory}
The general motivic philosophy says (among other things):
\begin{conj}[Beilinson et. al.]
For all varieties over $\Q$, there is a universal cohomology theory $X\mapsto h^*(X)$ with
values in an abelian category $\Mh\Mh$ (mixed motives) and a universal cohomology theory
with values in $\Mh\Mh$. There is also a universal absolute cohomology
theory (motivic cohomology) such that the Leray spectral sequence gives
short exact sequences
\[
 0\to \Ext^1_{\Mh\Mh}(\Q(0),h^{n-1}(X)(j))\to
\hm^n(X,j)\to\Hom_{\Mh\Mh}(\Q(0),h^n(X)(j))\to 0.
\]
Moreover, for smooth varieties $X$, there should be natural isomorphisms
\[\hm^i(X,j)\isom\Gr^j_\gamma K_{2j-i}(X)_\Q\; .\]
\end{conj}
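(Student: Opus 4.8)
This statement is one of the central open conjectures of the subject, so what follows is a \emph{program} rather than a proof: the point is to indicate which pieces are already theorems and to isolate the one genuinely open input.

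The plan is to separate the assertion into three layers. First, one needs a category in which to do motivic homological algebra: I would take Voevodsky's triangulated category of motives $\operatorname{DM}(\Q,\Q)$ with rational coefficients and invertible Tate twist, and \emph{define} motivic cohomology by $\hm^n(X,j):=\Hom_{\operatorname{DM}(\Q,\Q)}(M(X),\Q(j)[n])$. Second, the comparison with $K$-theory in the last line of the conjecture is, for smooth $X$, \emph{not} conjectural: it follows by combining the identification of these $\Hom$-groups with Bloch's higher Chow groups $\operatorname{CH}^j(X,2j-i)_\Q$ (Voevodsky; Friedlander--Suslin; Levine) with the motivic spectral sequence from higher Chow groups to algebraic $K$-theory (Bloch--Lichtenbaum, Friedlander--Suslin, Levine), whose rationalization degenerates and yields $\operatorname{CH}^j(X,2j-i)_\Q\isom\Gr^j_\gamma K_{2j-i}(X)_\Q$. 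So on that line nothing remains to prove.

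The third and essential layer is the abelian category $\Mh\Mh$ together with the Leray short exact sequence. I would obtain $\Mh\Mh$ as the heart of a non-degenerate $t$-structure on $\operatorname{DM}(\Q,\Q)$ (the \emph{motivic $t$-structure}), set $h^n(X):={}^{\mathrm m}\!H^n(M(X))$, and demand that the classical realization functors --- Betti, de Rham, $\ell$-adic, and the Hodge realization $\realH$ of lecture II --- be $t$-exact, which forces $\realH$ to carry $\Mh\Mh$ faithfully and exactly into $\Var(X\times_\Q\R/\R)$. Granting all this, the desired exact sequence is precisely the five-term sequence at the bottom of the descent spectral sequence
\[
E_2^{p,q}=\Ext^p_{\Mh\Mh}\!\bigl(\Q(0),h^q(X)(j)\bigr)\ \Longrightarrow\ \hm^{p+q}(X,j),
\]
once one knows that $\Ext^p_{\Mh\Mh}$ vanishes for $p\ge 2$ in the relevant range; over a number field this last vanishing is the Beilinson--Soul\'e conjecture together with the expected cohomological dimension one of $\Mh\Mh(\spec\Q)$.

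The hard part --- indeed the only hard part, and one I do not expect to be provable with current technology --- is the existence of the motivic $t$-structure. It is known (Bondarko; Levine; Wildeshaus in the Tate case) to be essentially equivalent to \emph{conservativity} of the family of classical realizations on $\operatorname{DM}$, and in the pure case to Grothendieck's standard conjectures together with the Hodge and Tate conjectures. Unconditionally, only the fragment needed here survives: the triangulated subcategory of mixed \emph{Tate} motives over a number field $F$ does carry a motivic $t$-structure, since Beilinson--Soul\'e vanishing reduces there to Borel's computation of $K_*(F)_\Q$ (Levine; Deligne--Goncharov). As every extension class occurring in the proof of the Main Theorem --- all the $r_{\Hh}\verk\epsilon_j(\omega)$ --- lives in an $\Ext^1$ of Tate objects, this fragment already suffices for everything that follows in these lectures. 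A realistic proposal is therefore: prove the conjecture unconditionally on the mixed Tate subcategory, and regard the general statement as conditional on conservativity of the realization functors.
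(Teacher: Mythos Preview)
The statement is explicitly labelled a \emph{Conjecture} in the paper, and the paper makes no attempt to prove it. Immediately after stating it, the authors simply adopt the last line as a \emph{definition}: for smooth $X$ they set $\hm^i(X,j):=\Gr^j_\gamma K_{2j-i}(X)_\Q$, and the rest of the lecture builds the relative and singular versions of this by the Gillet--Soul\'e formalism of spaces and generalized cohomology. So there is no ``paper's own proof'' to compare against; your opening remark that this is a program rather than a proof is exactly the right diagnosis, and in that sense your proposal and the paper agree.

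Where you go further than the paper is in sketching what a modern approach to the conjecture would look like: working in Voevodsky's $\operatorname{DM}$, invoking the now-proved comparison with higher Chow groups and the $\gamma$-filtration on $K$-theory, and isolating the existence of the motivic $t$-structure as the sole genuinely open input. That is a fair and well-organized summary of the current state of affairs, and your observation that the mixed Tate fragment over a number field is unconditional (via Borel and Levine/Deligne--Goncharov) is both correct and directly relevant to the applications in lectures~VII and~VIII. One caveat: much of the machinery you cite postdates the paper (which is a 1997 abstract), so this is a retrospective reading rather than a reconstruction of what the authors had in mind; their stance is purely pragmatic --- treat the conjecture as motivation, take the $K$-theoretic description as a definition, and proceed. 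A second small caveat: your derivation of the short exact sequence from the five-term sequence of the hypercohomology spectral sequence requires not just $\Ext^2$-vanishing in the relevant range but that the higher $\Ext^p$ also vanish (or at least that the relevant $d_2$ and higher differentials do); you gesture at this with ``cohomological dimension one'', but it is worth saying explicitly that this is itself part of the conjectural package rather than a consequence of the $t$-structure alone.
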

This leads us to {\em define}
\begin{defn}
For smooth $X$ over $\Z$, we put $\hm^i(X,j)\isom\Gr^j_\gamma K_{2j-i}(X)_\Q$.
We call this motivic cohomology of $X$.
\end{defn}
We need to extend this definition. We want
\begin{itemize}
\item relative motivic cohomology groups,
\item motivic cohomology of certain singular varieties,
\item localization sequences in this context.
\end{itemize}
We use the approach of Gillet and Soul\'e \cite{GS} also used in \cite{dJ}
by de Jeu. Details can be found in \cite{HW} Appendix B. The following
is a quick and very imprecise overview.

\subsection{Generalized cohomology}
We recall that the geometric realization functor induces an equivalence of categories between simplicial sets and $CW$-complexes up to homotopy. On the
other hand, the `associated complex' functor gives an equivalence of
categories between simplicial abelian groups and cohomological
complexes concentrated in negative degrees, both up to homotopy.
We sheafify these notions for the Zariski-topology. By this we mean the big
or small site of smooth schemes over a fixed smooth $\Z$-scheme $S_0$, equipped
with the Zariski-topology.
\begin{defn}[\cite{HW} B.1]
A {\em space} $Y$ is a simplicial sheaf of sets for the Zariski-topology which
is pointed by a map $\ast\to Y$. Here $\ast$ is the constant simplicial
object associated to the constant sheaf $S\mapsto \{\ast\}$. A morphism
$y:Y\to Y'$ is called {\em weak equivalence} if the induced morphisms
on the sheafified homotopy groups are isomorphisms for all choices of
base point.
\end{defn}

Spaces form a closed model category in the sense of Quillen. This means that
they behave `like topological spaces'. In particular, we can form suspensions $S$,
cones, loop spaces and form a homotopy category by formally inverting
weak euqivalence.

If $X$ is a scheme, let $\widetilde{X}$ be the constant simplicial object
associated to $S\mapsto X(S)\cup\{\ast\}$ pointed by the disjoint copy of
$\ast$.

\begin{defn} [\cite{HW} B.1.1]
A space $Y$ is {\em constructed from schemes} if all $Y_n$ are
of the form $\ast\cup$ scheme. We define {\em generalized cohomology} of $Y$
with coefficients in a space $A$ by 
\[ \hgen^{-m}(Y,A)=[S^mY,A]\hspace{3em}\text{for $m\geq 0$}\]
where $[\cdot,\cdot ]$ denotes morphisms in the homotopy category.
\end{defn}
In particular, we use $A=\k$, the sheafification of
$S\mapsto \Z\times\Z_\infty BGl(S)$. We speak of $K$-cohomology.
\begin{prop}[Brown-Gersten]
If $Y=\widetilde{X}$ for a scheme $X$ in the site, then 
$\hgen^{-m}(\tilde{X},\k)=K_m(X)$.
\end{prop}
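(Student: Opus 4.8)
The plan is first to strip away the simplicial-sheaf formalism by a Yoneda/adjunction argument, reducing $\hgen^{-m}(\widetilde X,\k)$ to a homotopy group of a complex of global sections, and then to invoke Brown--Gersten's descent theorem for $K$-theory, which on the schemes of our site (smooth over $\Z$, hence regular) rests on Quillen's localization theorem. For the first part, fix a globally fibrant replacement $\k\to\k_f$ in Quillen's model structure on pointed simplicial Zariski sheaves. Since $\widetilde X=(h_X)_+$ is the representable sheaf on $X$ with a disjoint basepoint adjoined, the adjunction between adding a disjoint basepoint and the forgetful functor, together with the enriched Yoneda lemma, gives a weak equivalence $\operatorname{Map}_\bullet(\widetilde X,\k_f)\simeq\k_f(X)$; combined with the suspension--loop adjunction on the homotopy category this yields
\[
\hgen^{-m}(\widetilde X,\k)=[S^m\widetilde X,\k]=[\widetilde X,\Omega^m\k_f]=\pi_m\bigl(\k_f(X)\bigr)=\pi_m\,\rgamma(X_{\Zar},\k),
\]
so that $\hgen^{-m}(\widetilde X,\k)$ is nothing but the generalized Zariski cohomology of $X$ with coefficients in $\k$.

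Next I would identify $\k$ with the Zariski sheafification of $K$-theory. The presheaf $S\mapsto\Z\times\Z_\infty BGl(S)$ and the presheaf $S\mapsto\mathbb K(S)$ of $K$-theory spaces have the same Zariski stalks: on a (connected) local ring every finitely generated projective module is free, so $K_0$ of such a ring is $\Z$ and the two presheaves already agree there. Hence $\k=a_{\Zar}(\mathbb K)$, so $\rgamma(X_{\Zar},\k)=\rgamma(X_{\Zar},\mathbb K)$, with an associated descent spectral sequence $E_2^{p,q}=H^p_{\Zar}(X,\underline{K}_q)\Rightarrow\pi_{q-p}\,\rgamma(X_{\Zar},\k)$, where $\underline{K}_q$ is the Zariski sheaf associated to $S\mapsto K_q(S)$.

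Now the Brown--Gersten theorem applies: on a Noetherian scheme $X$ of finite Krull dimension on which $K$-theory satisfies Zariski Mayer--Vietoris, the natural comparison map $\mathbb K(X)\to\rgamma(X_{\Zar},\k)$ is a weak equivalence. This Mayer--Vietoris property holds for our regular $X$ by Quillen's localization theorem; equivalently, Quillen's theorem that the Gersten complex is a flasque resolution of $\underline{K}_q$ identifies the spectral sequence above with the Brown--Gersten--Quillen coniveau spectral sequence of $X$, which converges to $K_{q-p}(X)$. Taking $\pi_m$ and using $\pi_m\mathbb K(X)=K_m(X)$ then gives
\[
\hgen^{-m}(\widetilde X,\k)=\pi_m\,\rgamma(X_{\Zar},\k)=\pi_m\,\mathbb K(X)=K_m(X),
\]
which is the assertion; the details of this chain of arguments are carried out in \cite{HW}, Appendix B, following \cite{GS}.

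The main obstacle is exactly the input to the last step: Quillen's localization theorem for regular schemes, or equivalently the Gersten resolution (over a field this is Quillen's theorem; its extension to schemes smooth over $\spec\Z$ is also known; and if one wished to drop the regularity hypothesis altogether one would appeal to Thomason--Trobaugh's proof of Zariski, indeed Nisnevich, descent for $K$-theory). Everything preceding it is formal manipulation in the homotopy category of simplicial sheaves. The only bookkeeping point to keep in mind there is the mismatch in degree zero --- $\pi_0$ of $\Z\times\Z_\infty BGl(S)$ is $\Z$, not $K_0(S)$ --- which is harmless because it is washed out by Zariski sheafification, as explained in the second paragraph.
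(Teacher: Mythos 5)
The paper gives no proof of this proposition: it is stated as an attribution to Brown--Gersten, with the technical details delegated to \cite{HW}, Appendix B.1 and the references there. Your write-up is the correct standard argument, and I have only minor comments.

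Your reduction $\hgen^{-m}(\widetilde X,\k)=\pi_m\,\k_f(X)$ via Yoneda, the basepoint adjunction, and the loop--suspension adjunction with a fibrant model $\k_f$ is exactly the formal half of the story. The identification of $\k$ with the sheafification of the $K$-theory presheaf is also fine: on a local ring all finitely generated projectives are free, so $\pi_0$ of $\Z\times\Z_\infty BGl$ and $\pi_0$ of the $K$-theory space both sheafify to $\Z$, and in positive degrees the presheaves already agree (here one implicitly uses that $\Z_\infty BGl(S)$ is a functorial model for $BGl(S)^+$). For the descent step, the precise input you need is not the Gersten resolution itself but only the Zariski Mayer--Vietoris property for the $K$-theory presheaf, which does follow from Quillen's localization theorem on the regular Noetherian schemes of the site (smooth over a smooth $\Z$-scheme); phrasing it via the coniveau/BGQ spectral sequence as you do is equivalent in this setting but invokes more than is strictly needed, and the Gersten conjecture for general regular local rings is still open, so it is cleaner to keep the appeal to Mayer--Vietoris plus finite Krull dimension as the actual hypothesis of the Brown--Gersten descent theorem. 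With that small rephrasing your argument is a faithful account of what ``Brown--Gersten'' refers to here.
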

Cohomology of abelian sheaves, e.g., absolute Hodge cohomology, can also be
written as generalized cohomology of a space $K({\cal A})$. 
So one point of generalized cohomology is that it allows to treat $K$-groups
and cohomology of abelian sheaves on an equal footing.
Gillet has
constructed Chern classes $ch_j:\k\to K({\cal A})$ for good
${\cal A}$ like the complexes defining absolute Hodge cohomology. 

\subsection{Motivic cohomology}
Gillet and Soul\'e have constructed maps $\lambda^i:\k\to\k$ for $i\geq 1$
such that $\hgen^i(Y,\k)$ is a turned into a $\lambda$-algebra, at least
if $Y$ is constructed from schemes. (We are lying here! See \cite{HW} B.2.)
Hence we also have a $\gamma$-filtration on $K$-cohomology.
\begin{defn}
If $Y$ is constructed from schemes and $2j+i\leq 0$, let
\[ \hm^i(Y,j)=\Gr^j_\gamma\hgen^{2j+i}(Y,\k)_\Q.\]
\end{defn}
The extension of the definition to spaces allows a lot of extra flexibility.\\

\examples\begin{enumerate}
\item If $T=\bigcup_{i\in I} C_i$ where all $C_i$ and all $\bigcap_{i\in I'} C_i$ for subsets $I'$ of $I$ are smooth over our base $S_0$, then we put
\begin{align*}
T_*&=\cosq_0(\coprod_{i\in I} C_i/T) \hspace{3em}\text{i.e.}\\
T_0& =\coprod_{i\in I} C_i\text{\ \  and\ \ }
T_n=\coprod_{I_n} \bigcap_{i\in I_n} C_i
\end{align*}
where $I_n$ runs through all $n+1$-tuples of elements in $I$. The big
advantage of $T_*$ is that all its components are smooth. 
By adding a disjoint base point we turn this into a space $\widetilde{T}_*$ 
constructed from schemes. 
We {\em define}
$\hm^k(T,j)=\hm^k(\widetilde{T}_*,j)$.
\item Let $T$ as before, $T\subset X$ where $X$ is also smooth. We put
$\hm^i(X\rel T, j)=\hm^i(\cone(\widetilde{T}_*\to \widetilde{X}),j)$ where
the cone is taken in the category of spaces. By definition it sits
in a long exact sequence for relative cohomology.
\end{enumerate}
\begin{theorem}[Soul\'e, de Jeu,\cite{HW} B.2.16] Let $T$ and $X$ be as in
the example. Let $Z\subset X$ be smooth of pure codimension $d$. Suppose
that $Z$ intersects all $\bigcap_{i\in I'} C_i$ transversally. Let $U=X\setminus Z$. Then there is a natural long exact sequence
\[\cdots\to\hm^{i-2d}(Z\rel T\cap Z,j-d)\to \hm^i(X\rel T,j)\to\hm^i(U\rel T\cap U,j)\to\cdots. \]
Moreover, it is compatible with the same sequence in absolute Hodge cohomology
via the Chern class morphism.
\end{theorem}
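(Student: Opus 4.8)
The plan is to construct the localization sequence by realizing both the ``relative'' and ``complement'' objects as cones of maps of spaces constructed from schemes, and then invoking the long exact homotopy sequence of a cofiber together with the $\gamma$-filtered, rationalized generalized cohomology formalism of §VI.1--2. First I would set up the relevant spaces: write $W = X \setminus Z = U$, and note that $T \cap Z = \bigcup_i (C_i \cap Z)$ is again a union of smooth pieces with smooth intersections (here the transversality hypothesis on $Z$ is used), so $\widetilde{(T\cap Z)}_*$, $\widetilde{Z}$, $\widetilde{(T\cap U)}_*$, $\widetilde{U}$ are all defined and constructed from schemes. One then has the evident commutative square of spaces relating $\widetilde{(T\cap Z)}_* \to \widetilde Z$ and $\widetilde{T}_* \to \widetilde X$ (via the closed immersions), and likewise for the open part; taking cones gives $\cone(\widetilde{T}_*\to\widetilde X)$ computing $\hm^*(X\rel T,j)$ and $\cone(\widetilde{(T\cap U)}_*\to\widetilde U)$ computing $\hm^*(U\rel T\cap U,j)$.

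The heart of the argument is a \emph{purity} statement on the level of generalized $K$-cohomology: the ``homotopy cofiber'' of the restriction map from the relative space of $(X,T)$ to that of $(U, T\cap U)$ should be identified, after a shift of $2d$ in degree and a Tate twist $j-d$ in the motivic indexing, with the relative space of $(Z, T\cap Z)$. Concretely, one uses the localization/dévissage theorem for $K$-theory of the pair $(X,Z)$ with supports (Quillen localization, $K^Z_*(X) \simeq K_*(Z)$ by the smoothness and codimension hypotheses, i.e.\ the projective bundle / deformation-to-the-normal-cone form of purity), sheafified for the Zariski topology in the sense of Brown--Gersten, and then feeds this through the $\gamma$-filtration. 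Since $\lambda$-operations are functorial and the purity isomorphism shifts weights by $d$, passing to $\Gr^j_\gamma(-)_\Q$ converts the $K$-theoretic localization triangle into the asserted long exact sequence with the correct twist; this is where one cites \cite{GS}, \cite{dJ} and \cite{HW} B.2.16. The compatibility of the relative cones of $T$ with those of $T\cap Z$ and $T\cap U$ (simplicial cosk$_0$ constructions commute with the relevant restrictions) is the bookkeeping that glues the absolute purity statement to the relative setting.

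The final assertion — compatibility with the corresponding localization sequence in absolute Hodge cohomology via the Chern class $ch_j : \k \to K(\mathcal A)$ — I would deduce from the fact that $ch_j$ is a morphism of spaces (hence induces a map of all the cone/suspension diagrams above), together with the known compatibility of Gillet's Chern classes with Gysin maps and localization sequences in cohomology of abelian sheaves; one simply maps the whole $K$-cohomology localization triangle into the absolute Hodge one and checks commutativity degree by degree.

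I expect the main obstacle to be the purity/dévissage step: making precise, at the level of \emph{spaces} (simplicial Zariski sheaves) rather than just homotopy groups, that the cofiber of $\widetilde U \to \widetilde X$ ``with the relative $T$-structure'' is weakly equivalent to a $2d$-fold suspension of $\widetilde Z$ twisted appropriately — i.e.\ promoting Quillen's localization sequence to a statement about homotopy cofibers of the associated $K$-cohomology spaces, and checking it interacts correctly with the $\cosq_0$ construction defining the relative objects and with the $\gamma$-filtration after rationalization. This is exactly the content that \cite{HW} Appendix B.2 is set up to handle, so in the lecture one would gesture at it and refer there; everything else (functoriality of cones, of $\lambda$-operations, and of $ch_j$) is formal.
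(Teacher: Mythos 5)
The paper itself gives no proof of this theorem; it is stated as a citation to Soul\'e, de Jeu, and \cite{HW}~B.2.16, so there is nothing internal to compare against. Your sketch is a reasonable reconstruction of what those references do, and the overall architecture (set up the relative spaces via $\cosq_0$ and cones, reduce to the Quillen/Thomason localization fibration in $K$-theory, apply d\'evissage for the regular closed subscheme, sheafify Brown--Gersten style, then pass to $\Gr^j_\gamma(-)_\Q$, and finally map everything along $ch_j$) is the right one.

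Two points deserve a warning, because your phrasing undersells where the actual mathematical content lives. First, the ``$2d$-fold suspension'' language is misleading. For regular $X$, $Z$, $U$, d\'evissage gives a fibration of $K$-theory spaces $K(Z)\to K(X)\to K(U)$ with \emph{no} degree shift; the index $i-2d$ in the statement is purely an artifact of the convention $\hm^i(Y,j)=\Gr^j_\gamma\hgen^{2j+i}(Y,\k)_\Q$. The genuine shift is the weight shift $j\mapsto j-d$, i.e.\ the statement that the boundary (Gysin) map $K_m(U)\to K_{m-1}(Z)$ carries the $\gamma$-filtration of weight $j$ into weight $j-d$. That is a theorem of Soul\'e (extended by de Jeu and \cite{HW}~B.2 to the relative/simplicial setting), not a formality that drops out of ``functoriality of $\lambda$-operations'': the $\lambda$-operations do not naively commute with Gysin maps, and establishing the filtration shift is the hard part of the theorem. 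Second, the Chern-class compatibility for the \emph{boundary} map is likewise not a matter of ``simply checking commutativity degree by degree.'' Gillet's Chern classes commute with restrictions on the nose, but their compatibility with Gysin maps is Riemann--Roch (with the Todd-class discrepancy disappearing only after passing to the associated graded of the $\gamma$-filtration rationally). You do gesture at this by citing Gillet, but a reader of your write-up might come away thinking both steps are formal bookkeeping, whereas they are precisely the nontrivial inputs the theorem's attribution is acknowledging.

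With those caveats made explicit, the proposal is sound as an outline and matches the route taken in \cite{HW}~Appendix~B.
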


\section{Motivic polylogarithm}
We now need mixed Hodge modules over $\R$ (\cite{HW} Appendix A). The 
category is denoted $\MHM(X/\R)$. Whereas
admissible variations are the Hodge theoretic version of locally constant
sheaves, Hodge modules correspond to (perverse) constructible sheaves.
\begin{defn}
Let $X$ be a smooth variety over $\R$ and $Y\subset X$ with complement
$j:U\to X$. We define absolute Hodge cohomology by
\begin{gather*}
\habs^i(X/\R,\Q(n))=\Ext^i_{\MHM(X/\R)}(\Q(0),\Q(n))\\
\habs^i(X\rel Y/\R,\Q(n))=\Ext^i_{\MHM(X/\R)}(\Q(0),j_!\Q(n)).
\end{gather*}
\end{defn}
It can be shown that this agrees with Beilinson's ad hoc version (\cite{HW}, Theorem A.2.7). Everything
done in this talk translates immediately into the $l$-adic setting.
\subsection{Geometric origin of $\Log$ and $\pol$}
We now come to a quick tour through \cite{HW}. We consider the following
geometric situation with $\U=\Pe^1\setminus\{0,1,\infty\}$ and $Z=1\times\U\amalg \Delta$, $V=\gm{\U}\setminus Z$, $Z^{(n)}=\gm{\U}^n\setminus V^n$:
\[ {\begin{array}{ccccc}
    V&\xrightarrow{v}{}&\G_{m,\U}&\hookleftarrow Z\\
     &                &\downarrow p\\
     &                &\U
    \end{array}}\hspace{3em}
 {\begin{array}{ccccc}
    V^n&\xrightarrow{v^n}{}&\G^n_{m,\U}&\hookleftarrow Z^{(n)}\\
     &                &\downarrow p^n\\
     &                &\U
    \end{array}}
\]
Recall from lecture III that $\Log_\U^{(1)}=\shG^{(1)}=R^1p_*v_!\Q(1)$ where we
now use  the correct formulation in terms of Hodge modules. Hence
\[ \Log^{(n)}_\U=\Sym^n\Log^{(1)}_\U=\Sym^n R^1p_*v_!\Q(1)
= R^np^n_*v^n_!\Q(n)^\sgn \]
where we have to take the sign-eigenspace with respect to the operation
of the symmetric group because the cup-product is anti-symmetric.
\begin{defn} Let $\shG^{(n)}=R^np^n_*v^n_!\Q(n)^\sgn$.
\end{defn}
With this definition we have (\cite{HW} \S 4)
\begin{align*}
\Ext^1_{\Var(\U/\R)}(\Q(0),\shG^{(n)})&=\Ext^1_{\MHM(\gm{\U}/\R)}(\Q(0),R^np^n_*v^n_!\Q(n)^\sgn)\\
&=\Ext^1_{\MHM(\gm{\U}^n/\R)}(\Q(0),v^n_!\Q(n)^\sgn)\\
&=\habs^{n+1}(\gm{\U}^n\rel Z^{(n)},\Q(n))^\sgn.
\end{align*}
Note that the corresponding motivic cohomology groups
$\hm^{n+1}(\gm{\U}^n\rel Z^{(n)},\Q(n))^\sgn$ are also well-defined!!!
Our main tool in the sequel is the {\em Residue sequence}.
\begin{prop}[\cite{HW} after 4.6 and 7.2]
Let $\bar{Z}=\A^1_\U\setminus V$. There are long exact sequences in
motivic and absolute Hodge cohomology, which are also compatible under
Chern classes:
\begin{multline*}
\cdots\to  H_?^i(\A^n_\U\rel\bar{Z}^{(n)},j)^\sgn\to
  H_?^i(\gm{\U}^n\rel{Z}^{(n)},j)^\sgn\\
\to H_?^{i-1}(\gm{\U}^{n-1}\rel{Z}^{(n-1)},j-1)^\sgn
\to H_?^{i+1}(\A^n_\U\rel\bar{Z}^{(n)},j)^\sgn\to\cdots.
\end{multline*}
\end{prop}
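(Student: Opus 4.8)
The plan is to exhibit both long exact sequences as instances of the localization (purity) sequences already available in motivic and in absolute Hodge cohomology, so that their Chern-class compatibility comes for free.

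First unwind the geometry. Since $V^n=\A^n_\U\setminus\bar{Z}^{(n)}$ and $Z^{(n)}=\gm{\U}^n\setminus V^n=\bar{Z}^{(n)}\cap\gm{\U}^n$, the open immersion $\gm{\U}^n\hookrightarrow\A^n_\U$ has closed complement $D:=\bigcup_{k=1}^n\{u_k=0\}$, and $D\subset\bar{Z}^{(n)}$. For either theory $H_?$ the excision sequence of the pair $(\A^n_\U,\bar{Z}^{(n)})$ along $D$ then reads
\[
\cdots\to H_?^i(\A^n_\U\rel\bar{Z}^{(n)},j)\to H_?^i(\gm{\U}^n\rel Z^{(n)},j)\to H^{i+1}_{?,D}(\A^n_\U\rel\bar{Z}^{(n)},j)\to\cdots,
\]
where $H^{\bullet}_{?,D}$ denotes cohomology with supports in $D$; this sequence is functorial in the theory, hence compatible with the Chern class map. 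So everything reduces to identifying $H^{i+1}_{?,D}(\A^n_\U\rel\bar{Z}^{(n)},j)^{\sgn}$ with $H^{i-1}_?(\gm{\U}^{n-1}\rel Z^{(n-1)},j-1)^{\sgn}$.

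This identification is a purity computation based on the localization theorem of lecture VI (\cite{HW}, B.2.16) together with its relative and singular-variety extensions. For $n=1$, $D$ is a single smooth section $\cong\U$ of codimension one, disjoint from the other components of $\bar{Z}^{(1)}$ since $0,1,t$ are distinct over $\U$. For $n\ge2$, $D$ is the union of the $n$ smooth codimension-one divisors $D_k=\{u_k=0\}$, which together with the remaining components of $\bar{Z}^{(n)}$ form a normal-crossings configuration, so one removes the $D_k$ one at a time, resolving $H^{\bullet}_{?,D}$ by the contributions of the partial intersections $D_S=\bigcap_{k\in S}D_k\cong\A^{n-|S|}_\U$ via a Mayer--Vietoris spectral sequence. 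Two geometric facts drive the computation: the trace of $\bar{Z}^{(n)}$ on the divisor $D_k$ is exactly $\bar{Z}^{(n-1)}$ (the conditions $u_k=1$ and $u_k=t$ become vacuous on $\{u_k=0\}$), and near $D_k$ the open set $V^n$ is locally a punctured disc in the $u_k$-direction times the $(n-1)$-variable situation, so that purity along $D_k$ contributes precisely one Tate twist and one cohomological degree.

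The point of the $\sgn$-superscript is that $\sym_n$ permutes the $D_k$ and acts on the residues with the Koszul signs forced by anticommutativity of the cup product, exactly as in $\Log^{(n)}=(R^np^n_*v^n_!\Q(n))^{\sgn}=\Sym^n\Log^{(1)}$. In the $\sgn$-eigenspace the iterated residues ($|S|\ge2$) cancel, the depth-one residue along a single $D_k$ survives, and the remaining \emph{constant} contributions — cohomologies of the affine-space strata $D_S$ relative to their $\bar{Z}$-traces, which by homotopy invariance of the bundle maps $\A^m_\U\to\U$ reduce to cohomology of $\U$ and cancel against the corresponding part of $H_?^{\bullet}(\A^n_\U\rel\bar{Z}^{(n)})^{\sgn}$ — drop out; what is left is $H^{i-1}_?(\gm{\U}^{n-1}\rel Z^{(n-1)},j-1)^{\sgn}$. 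This is the delicate step and the main obstacle: keeping track of the $\sym_n$-representations and the Koszul signs through the spectral sequence and the purity isomorphisms, and verifying that exactly the $(n-1)$-variable term survives the cancellations. The motivic and Hodge computations are formally identical, and every purity isomorphism and boundary morphism used is already Chern-compatible by \cite{HW}, B.2.16; these compatibilities survive the finite Mayer--Vietoris spectral sequence and the $\sym_n$-averaging, which gives the asserted Chern-class compatibility of the two Residue sequences. For the complete argument see \cite{HW}, after 4.6 and 7.2.
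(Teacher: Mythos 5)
Your route is genuinely different from the paper's. The paper (for $n=2$) works through the \emph{nested} inclusions $\G_m^2\subset\A^2\setminus\{(0,0)\}\subset\A^2$, invoking the localization sequence of lecture VI twice, first removing the origin and then the two punctured axes (each step involving a \emph{smooth} closed subscheme), and then lets the $\sgn$-eigenspace collapse the resulting ladder. You instead remove the full normal-crossings divisor $D=\bigcup_k\{u_k=0\}$ in one step, pass to cohomology with supports, and propose to compute $H^{\bullet}_{?,D}$ by a Mayer--Vietoris spectral sequence over the strata $D_S$. This is the ``all at once'' version of the same idea, and if it worked it would be cleaner conceptually; but it leans on tools the paper deliberately avoids.

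There is a concrete gap. The localization theorem quoted in lecture VI requires the closed subscheme $Z$ to be \emph{smooth} and to intersect all partial intersections $\bigcap_{i\in I'}C_i$ of the components of $T$ \emph{transversally}. Your $D$ is not smooth, which is why you introduce the spectral sequence; but even for each individual $D_k=\{u_k=0\}$, transversality fails because $D_k$ is itself one of the components of $\bar Z^{(n)}$. In particular your assertion that ``the trace of $\bar Z^{(n)}$ on $D_k$ is exactly $\bar Z^{(n-1)}$'' is literally false: since $D_k\subset\bar Z^{(n)}$, the scheme-theoretic intersection $\bar Z^{(n)}\cap D_k$ is all of $D_k$. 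What you mean is presumably that, after separating out the $D_k$-component of the cone $\cone(\widetilde{\bar Z^{(n)}_*}\to\widetilde{\A^n_\U})$, the remaining components trace out $\bar Z^{(n-1)}$ on $D_k\isom\A^{n-1}_\U$; but extracting that separation is precisely the point of the careful cone/coskeleton bookkeeping, and it does not come from the cited localization theorem as stated. A related issue: generalized cohomology with supports $H^{\bullet}_{?,D}$ is not a primitive of the Gillet--Soul\'e space formalism of lecture VI; you would have to build it (e.g.\ as $\hgen$ of a cone of spaces) and reprove the purity statements for it, at which point you are essentially redoing the nested-localization argument. The paper's nested route is designed exactly to sidestep both difficulties: at each step one has a single smooth closed subscheme, and its interaction with the remaining components of $T$ is kept under control by relative cohomology rather than by supports. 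So while your plan is morally right and the $\sgn$-cancellation mechanism you describe is the correct one, as written it assumes more machinery than the paper provides, and the key geometric identification in the purity step is stated incorrectly.
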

\begin{proof}
In the $n=2$-case we consider the localization sequences in relative cohomology for the two inclusions
$\G_m^2\subset\A^2\setminus (0,0)\subset \A^2$. The effect of the sign-eigenspaces leads to the above form of the sequence.
\end{proof}
\begin{defn} The residue maps are given by the map from the residue sequence
$\res_n:H_?^{n+1}(\gm{\U}^n\rel Z^{(n)},n)\to H_?^{n}(\gm{\U}^{n-1}\rel Z^{(n-1)},n)$.
\end{defn}
\begin{lemma}[\cite{HW} 7.3]
$H^i_?(\A^n_\U\rel\bar{Z}^{(n)},n)\isom H_?^{i-n}(\U,j)$.
\end{lemma}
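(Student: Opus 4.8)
The plan is to compute the left hand side by pushing everything forward along the projection $q\colon\A^n_\U\to\U$ and then reducing to $\U$ itself, so that only the fibre cohomology and the $\sym_n$-action have to be understood.

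First I would make the geometry explicit. The locus $\bar{Z}=\A^1_\U\setminus V$ is a disjoint union of finitely many sections of $\A^1_\U\to\U$, and accordingly
\[
\bar{Z}^{(n)} \;=\; \A^n_\U\setminus V^n \;=\; \bigcup_{i=1}^{n}\,\pr_i^{-1}(\bar{Z})\; ,
\]
where $\pr_i\colon\A^n_\U\to\A^1_\U$ is the $i$-th projection. Thus $(\A^n_\U,\bar{Z}^{(n)})$ is, relatively over $\U$, the $n$-fold product of $(\A^1_\U,\bar{Z})$ with itself, and the $\sym_n$ permuting the $n$ factors permutes the pieces $\pr_i^{-1}(\bar{Z})$ compatibly.

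Next I would run the Leray spectral sequence for $q$, both in absolute Hodge cohomology and --- compatibly under the Chern class maps --- in the motivic cohomology of Lecture VI, where it has to be produced from the homotopy-invariance and localization properties recorded there rather than from a six-functor formalism. Since $\A^n_\U\to\U$ is a trivial $\A^n$-bundle, homotopy invariance and a relative K\"unneth formula identify the fibre cohomology with $\bigl(H^\bullet(\A^1\rel(\text{finitely many labelled points}))\bigr)^{\otimes n}$; collapsing those points inside the contractible $\A^1$ shows $H^\bullet(\A^1\rel(\text{finitely many points}))$ is a Tate vector space sitting in the single cohomological degree $1$, so the tensor power is concentrated in degree $n$ and, the points being labelled, forms a constant Tate local system on $\U$. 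The spectral sequence then degenerates for dimension reasons and yields $H^i_?(\A^n_\U\rel\bar{Z}^{(n)},n)\isom H^{i-n}_?(\U,\shV_n)$ with $\shV_n$ this constant coefficient system carrying the permutation action of $\sym_n$.

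Finally I would pass to the $\sgn$-eigenspace. Because the fibre cohomology lives in the odd degree $1$, $\sym_n$ acts on $\shV_n$ by permutation twisted by the Koszul sign, so the $\sgn$-eigenspace is the one-dimensional ``fundamental class'' line of the fibre, i.e.\ a single Tate twist $\Q(j)$; this gives $H^i_?(\A^n_\U\rel\bar{Z}^{(n)},n)^{\sgn}\isom H^{i-n}_?(\U,j)$, and the motivic and Hodge-theoretic versions agree because every step above does. The hard part is the middle step: motivic cohomology, set up via spaces, cones and suspensions in Lecture VI with no sheaf formalism available, forces the Leray and K\"unneth reductions along $\A^n_\U\to\U$ to be built by hand from the localization sequences of that lecture, and through this one must transport the $\sym_n$-action together with its Koszul signs precisely --- it is exactly the sign-eigenspace that collapses the $n$-fold tensor power down to the single summand $\Q(j)$ on the right, and keeping the fibre cohomology in a single degree is what rules out extension problems that would otherwise obstruct the identification.
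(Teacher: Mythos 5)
Your reduction to a fibre-wise computation along $q\colon\A^n_\U\to\U$ is the right shape of argument, but the step where you invoke the $\sgn$-eigenspace is wrong, and that step is where the actual content of the lemma sits. For a space $W$ placed in odd degree, the $\sgn$-eigenspace of $W^{\tensor n}$ under the Koszul-twisted $\sym_n$-action is $\Sym^n W$, not $\Lambda^n W$ --- that is exactly how $\shG^{(n)}=\Sym^n\shG^{(1)}$ arises as the $\sgn$-eigenspace of $R^np^n_*v^n_!\Q(n)$, with $\shG^{(1)}$ of rank two. So if your fibre cohomology $W=H^1\bigl(\A^1\rel\bar{Z}_u\bigr)$ were of rank two, the $\sgn$-eigenspace would have rank $n+1$ and the lemma would fail; the Koszul sign does not ``collapse'' the tensor power, it enlarges the eigenspace relative to $\Lambda^n$. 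The claim holds only because $W$ is already one-dimensional, and this rank-one computation --- the entire substance of the lemma --- is absent from your proof.

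Worse, with $\bar{Z}=\A^1_\U\setminus V$ as you (following the abstract) have it, $\bar{Z}$ consists of the three sections $\{0\}\times\U$, $\{1\}\times\U$, $\Delta$, so $\bar{Z}_u=\{0,1,u\}$ and $W\isom\Q^3/\Q$ has rank two; by the above, your conclusion would then be false. The definition that makes the lemma true --- and that makes the transversality needed in the iterated localization $\G_m^n\subset\A^n\setminus\{0\}\subset\cdots\subset\A^n$ producing the residue sequence available --- is $\bar{Z}=$ the closure of $Z$ in $\A^1_\U$, i.e.\ only the two sections $\{1\}\times\U$ and $\Delta$. Then $\bar{Z}_u=\{1,u\}$, $W\isom\Q^2/\Q$ is one-dimensional, $W^{\tensor n}$ is one-dimensional, and $\sym_n$ acts through $\sgn$, so the eigenspace projection is a no-op. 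Once this is in place, peeling off one $\A^1$-factor at a time by the localization and homotopy-invariance statements of lecture VI (the only tools available motivically, as you correctly flag) yields the shift of degree and twist by $n$.
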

There is an alternative description of the localization sequence in the
case of absolute Hodge cohomology. The same constructions that led
to the residue sequence, also lead to a sequence of variations on $\U$ by using $\Hh_\U^i(\cdot)=R^ip_*(\cdot)$, namely
\[ 0\to\Q(n)_\U\to \Hh_\U^n(\gm{\U}^n\rel\Z^{(n)},n)^\sgn\xrightarrow{\res_n}{}
  \Hh_\U^{n-1}(\gm{\U}^{n-1}\rel\Z^{(n-1)},n)^\sgn\to 0.\]
Note that $\Hh_\U^n(\gm{\U}^n\rel\Z^{(n)},n)^\sgn$ is just a different
way of writing $\shG^{(n)}$.
\begin{prop}[\cite{HW} 4.9, 4.8]
The square
\[\begin{CD}
\shG^{(n)}@>\res_n>>\shG^{(n-1)}\\
@V\isom VV @VV\isom V\\
\Sym^n\shG^{(1)}@>\text{proj}>> \Sym^{n-1}\shG^{(1)}
\end{CD}\]
commutes. Hence the transition maps of $\Log$ are of geometric origin. Morover
the residue sequence in absolute Hodge cohomology is the long
exact sequence attached to the short exact sequence of sheaves above.
\end{prop}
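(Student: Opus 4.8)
The plan is to reduce the statement to the case $n=2$ by a $\Sym$-power / induction argument, and in that base case to compare the two residue maps directly on the level of the variations $\Hh^i_\U(\cdot)$ using the geometric description of $\shG^{(1)}$. First I would recall the two identifications of $\shG^{(n)}$: by definition it is $R^np^n_*v^n_!\Q(n)^{\sgn}$, hence $\Hh^n_\U(\gm{\U}^n\rel Z^{(n)},n)^{\sgn}$, and by the computation of lecture VI (the $\Sym$-power formula $\Log^{(n)}_\U=\Sym^n\Log^{(1)}_\U$ together with the Künneth/cup-product identity $R^np^n_*v^n_!\Q(n)^{\sgn}=\Sym^nR^1p_*v_!\Q(1)$) it is canonically $\Sym^n\shG^{(1)}$. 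The content of the proposition is that under these identifications the residue map $\res_n$ coming from the localization sequence for $\G_m^n\subset\A^n\setminus 0\subset\A^n$ (sign-eigenspace part) agrees with the natural projection $\Sym^n\shG^{(1)}\to\Sym^{n-1}\shG^{(1)}$ induced by the structure of $\shG^{(1)}$ as an extension of $\Q(0)$ by $\Q(1)$.

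The key steps, in order: (i) Establish that $\res_n$ is a morphism of variations on $\U$ fitting into the short exact sequence $0\to\Q(n)_\U\to\shG^{(n)}\xrightarrow{\res_n}\shG^{(n-1)}\to 0$; this is exactly the localization (Gysin) sequence for the smooth pair, with the kernel $\Q(n)_\U$ identified via the lemma $H^i_?(\A^n_\U\rel\bar Z^{(n)},n)\isom H^{i-n}_?(\U,j)$ and $H^0(\U,0)=\Q$. (ii) Reduce to $n=2$: the residue maps are compatible with cup-product (they are all built from the \emph{same} geometric construction, a localization boundary on a product of copies of $\G_m$ relative to the divisor $Z$), so $\res_n$ is determined by $\res_2$ and the multiplicative structure; precisely, $\res_n\colon\Sym^n\to\Sym^{n-1}$ is the unique derivation-type map extending $\res_2$ under the algebra structure $\bigoplus_n\Sym^n\shG^{(1)}$, which forces it to be the canonical projection once one knows the $n=2$ case. (iii) Do the $n=2$ case by hand: here $\shG^{(1)}=R^1p_*v_!\Q(1)=\HH^1(\G_m(\C)\rel\{1\}\amalg\{z\},\Q(1))$ from lecture III, a rank-two variation, and one computes the boundary map $\HH^2(\G_m^2\rel Z^{(2)},\Q(2))^{\sgn}\to\HH^1(\G_m\rel Z,\Q(2))$ explicitly, checking it is the projection $\Sym^2\shG^{(1)}\to\Sym^1\shG^{(1)}$ dual to the coproduct. (iv) Deduce the two corollaries: the transition maps $\Log^{(n)}\to\Log^{(n-1)}$ are the $\shG$-residues, hence of geometric origin; and, since the residue sequence in absolute Hodge cohomology is obtained by applying $\Ext^*_{\MHM}(\Q(0),-)$ to the short exact sequence of variations of step (i), it \emph{is} the long exact $\Ext$-sequence attached to that short exact sequence — this is the statement ``the residue sequence \dots{} is the long exact sequence attached to the short exact sequence of sheaves above.''

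The main obstacle will be step (iii): verifying by explicit computation that the geometric residue boundary map on the product $\gm{\U}^2$ relative to $Z^{(2)}$, after passing to the sign-eigenspace, really is the canonical symmetric-power projection and carries the right Tate twist, rather than merely \emph{some} surjection $\shG^{(2)}\to\shG^{(1)}$ with kernel $\Q(2)_\U$. Concretely one must track the identification $\Log^{(1)}=\shG^{(1)}=R^1p_*v_!\Q(1)$ through the Künneth isomorphism and the Gysin map for $\{1\}\subset\A^1$, and check signs and normalizations against the universal property of $\Log$ (Proposition~\ref{III.1.4}) — this is precisely the kind of bookkeeping where the factor of $2\pi i$ and the $\sgn$-convention must be pinned down. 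Everything else is formal: the $\Sym$-power reduction in step (ii) is dictated by the multiplicativity of localization boundaries, and step (iv) is immediate once (i)--(iii) are in place. For the full details one refers to \cite{HW}, \S4, in particular 4.8 and 4.9.
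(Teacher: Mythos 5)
The paper offers no proof of this proposition in the text; it only cites \cite{HW}~4.8 and 4.9. There is therefore no internal argument to compare your proposal against, only the reference, and the assessment below concerns the viability of your outline on its own terms.

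Your outline is reasonable in spirit, but step~(ii) contains the real gap. You assert that $\res_n$ is ``the unique derivation-type map extending $\res_2$'' by appeal to ``multiplicativity of localization boundaries,'' and treat this as formal. It is not: the boundary $\res_n$ does not arise from the product decomposition $\gm{\U}^n=\gm{\U}\times\gm{\U}^{n-1}$ but from the chain $\gm{\U}^n\subset\A^n_\U\setminus\{0\}\subset\A^n_\U$ (as the paper's sketch of the residue sequence for $n=2$ makes explicit), and only after passing to the $\sgn$-eigenspace of the symmetric group and working relative to $Z^{(n)}$. The identification of that eigenspace with $\Sym^n\shG^{(1)}$ already exploits the antisymmetry of cup product, and threading a Leibniz rule through the eigenspace decomposition, the relative structure, and the Gysin boundary is the actual crux of the matter --- considerably harder than the $n=2$ verification you single out in step~(iii). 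As written, step~(ii) is an assertion, not an argument, and the rest of the proof hangs on it.

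A route that sidesteps the multiplicativity question, and is more in keeping with the universal-property theme running through lectures III--IV, is the following. By weight considerations any morphism $\Log\to\Log^{(n-1)}$ kills $W_{-2n-2}\Log$, so by Proposition~\ref{III.1.4}
\[\Hom_{\Var(\U/\R)}(\Log^{(n)},\Log^{(n-1)})=\Hom_{\Var(\U/\R)}(\Log,\Log^{(n-1)})\cong\Hom_{\MHS}\Bigl(\Q(0),\textstyle\prod_{k=0}^{n-1}\Q(k)\Bigr)\cong\Q.\]
Once $\shG^{(n)}\cong\Sym^n\shG^{(1)}\cong\Log^{(n)}$ is in place, both $\res_n$ and the symmetric-power projection are nonzero elements of this one-dimensional space, hence proportional; a single normalization check (e.g.\ on $\Gr^W_0\cong\Q(0)$) then pins down the constant. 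This makes step~(iv) transparent as well, since the residue long exact sequence and the $\Ext^*_{\MHM(\U/\R)}(\Q(0),-)$ sequence of the short exact sequence of variations are then matched term by term through the Leray identifications already established in the text. You should either supply the missing multiplicativity argument in (ii), or replace (ii)--(iii) by an argument of this rigidity type.
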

Up to now everything would have worked for a general base $S$ instead of $\U$.
But now we use the simple form of $\U$. Its cohomology is Tate and by Borel's 
theorem we understand the corresponding motivic cohomology and the regulator
very well.
\begin{lemma}[\cite{HW} 8.3]
With $B=\spec \Z$, the following composition is bijective:
\[ \hm^0(B,0)\xrightarrow{i_1}{}\bigoplus_{i=0,1}\hm^0(B,0)
=\hm^1(\U,1)\to \hm^2(\gm{\U}\rel Z,1).\]
Call the inverse map $\res$, the total residue.
\end{lemma}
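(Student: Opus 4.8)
The plan is to prove the bijectivity of the composition
\[
\hm^0(B,0)\xrightarrow{i_1}\bigoplus_{i=0,1}\hm^0(B,0)=\hm^1(\U,1)\longrightarrow\hm^2(\gm{\U}\rel Z,1)
\]
by computing each of the two maps separately and identifying the cokernels/kernels explicitly. First I would recall that by our definition of motivic cohomology as $\gamma$-graded pieces of $K$-theory, and by Borel's theorem together with the elementary structure of $\U=\Pe^1\setminus\{0,1,\infty\}$, we have $\hm^0(B,0)=\Q$, while $\hm^1(\U,1)=\Oh^*_{\operatorname{alg}}(\U)\otimes\Q$ which, since $\U$ is $\Pe^1$ minus three points, is the free $\Q$-module on the classes of the two coordinate functions $t$ and $1-t$ (the third is determined since $t\cdot\frac{1}{1-t}\cdots$ are multiplicatively related only by torsion). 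This gives the identification $\hm^1(\U,1)=\Q\oplus\Q$ with the two summands indexed by $i=0,1$, and the map $i_1$ is then literally the diagonal-type inclusion sending $1\in\hm^0(B,0)$ to the pair coming from the two punctures; one checks this is injective with the obvious complement, matching the displayed equality $\bigoplus_{i=0,1}\hm^0(B,0)=\hm^1(\U,1)$.

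The second, and more substantial, step is to analyze the map $\hm^1(\U,1)\to\hm^2(\gm{\U}\rel Z,1)$. Here I would invoke the residue sequence (the Proposition after 4.6 and 7.2) in the case $n=1$, together with Lemma \ref{HW}~7.3 (the Lemma stating $H^i_?(\A^n_\U\rel\bar{Z}^{(n)},n)\isom H^{i-n}_?(\U,j)$, here with $n=1$) which identifies $\hm^2(\A^1_\U\rel\bar{Z},1)$ with $\hm^1(\U,1)$. The residue sequence for $n=1$ then reads
\[
\cdots\to\hm^i(\A^1_\U\rel\bar{Z},1)\to\hm^i(\gm{\U}\rel Z,1)\to\hm^{i-1}(\U\rel?,0)\to\cdots,
\]
and with the $n=1$ terminal object $\hm^{\bullet}(\gm{\U}^0\rel Z^{(0)},0)=\hm^{\bullet}(\spec\Z,0)$ being concentrated in degree $0$ equal to $\Q$, the relevant portion in degree $i=2$ becomes
\[
\hm^1(\gm{\U}^0,0)\to\hm^2(\A^1_\U\rel\bar{Z},1)\to\hm^2(\gm{\U}\rel Z,1)\to\hm^1(\gm{\U}^0,0).
\]
Since $\hm^1(\spec\Z,0)=\Gr^0_\gamma K_{-1}(\Z)_\Q=0$ and similarly $\hm^0$-neighbouring terms are controlled by Borel, the map $\hm^2(\A^1_\U\rel\bar Z,1)\to\hm^2(\gm\U\rel Z,1)$ is an isomorphism onto its image with cokernel injecting into a vanishing group; combined with the identification $\hm^2(\A^1_\U\rel\bar Z,1)=\hm^1(\U,1)$, this shows the second map of our composition is an isomorphism.

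The main obstacle I anticipate is keeping straight the precise low-degree terms of the residue sequence — in particular verifying that all the potentially-obstructing motivic cohomology groups of $\spec\Z$ and of $\U$ in the relevant degrees and twists actually vanish (or are exactly accounted for by the $i_1$ part), which rests on Borel's computation of $K_*(\Z)_\Q$ and on the fact that $\hm^i(\U,j)$ for the Tate variety $\U$ is completely explicit. One must be careful that the sign-eigenspace decorations play no role for $n=1$ (the symmetric group is trivial) but that the boundary maps are the geometric residue maps of the Definition preceding this Lemma, so that the composite in the statement is literally "include the two punctures, then take the total residue backwards." Once the vanishing statements are in hand, bijectivity of the composition is the concatenation of: $i_1$ injective with image a direct summand complementary to nothing (i.e.\ surjective after the identification), and the residue map an isomorphism by the exact sequence. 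I would then define $\res$ as the inverse, noting it is characterized by $\res\circ(\text{composition})=\id$, which is exactly the normalization needed for the construction of the motivic polylogarithm in the next subsection.
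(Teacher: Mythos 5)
The approach is in the right spirit (residue sequence plus Lemma 7.3 plus vanishing of adjacent terms), but there is a genuine computational error that propagates into a contradictory conclusion.

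The error is the identification $\gm{\U}^0 = \spec\Z$. In the geometric setup of this lecture, $\gm{\U}^n = \G_{m,\U}^n$ is the $n$-fold fibre power of $\G_m$ over $\U$, so $\gm{\U}^0 = \U$ (with $Z^{(0)} = \emptyset$), not $\spec\Z$. Consequently the portion of the residue sequence in low degrees reads
\[
\hm^1(\gm{\U}\rel Z,1)\to\hm^0(\U,0)\to\hm^2(\A^1_\U\rel\bar Z,1)\to\hm^2(\gm{\U}\rel Z,1)\to\hm^1(\U,0)\to\cdots,
\]
and the term you need to control on the left is $\hm^0(\U,0)=\Q$, which does not vanish (whereas $\hm^1(\U,0)=\Gr^0_\gamma K_{-1}(\U)_\Q=0$ does). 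So the map $\hm^2(\A^1_\U\rel\bar Z,1)=\hm^1(\U,1)\to\hm^2(\gm{\U}\rel Z,1)$ is surjective but has a one-dimensional kernel; it is not an isomorphism. Your own computation should have flagged this: if both maps in the displayed composition were isomorphisms, you would be asserting that $\Q\xrightarrow{i_1}\Q\oplus\Q$ is bijective, which it is not, since $i_1$ is the inclusion of a single summand. The remark at the end that $i_1$ is ``surjective after the identification'' is where the argument silently breaks.

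The missing content is precisely the interesting part: one must show that the boundary map $\hm^0(\U,0)\to\hm^2(\A^1_\U\rel\bar Z,1)\cong\hm^1(\U,1)=\Q_{\{i=0\}}\oplus\Q_{\{i=1\}}$ is injective with image the $i=0$ summand (the one corresponding to the function $t$, i.e., to the puncture at $0$, which is removed from $\A^1_\U$ when passing to $\gm{\U}$ but does not lie in $Z$). Only then does the kernel of the second map become complementary to the image of $i_1$, so that the total composition $\Q\hookrightarrow\Q\oplus\Q\twoheadrightarrow\hm^2(\gm{\U}\rel Z,1)\cong\Q$ is bijective. This is a genuine geometric identification of a boundary map, not a dimension count, and the proposal does not address it. To repair the proof you need to (a) correct $\gm{\U}^0=\U$, (b) insert the nonvanishing term $\hm^0(\U,0)=\Q$, and (c) trace the residue map explicitly to see which summand it hits.
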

\begin{theorem}[\cite{HW} Corollary 8.8]
We have a commutative square:
\[\begin{CD}
\prolim \hm^{n+1}(\gm{\U}\rel Z^{(n)},n)^\sgn@>r_\hodge >>
\habs^1(\U_\R,\Log_\U)\\
@V\res VV@V\text{residue at $1$}V \text{from III}V\\
\hm^0(B,)@>>> \habs^0(B_\R,\Q(0))
\end{CD}\]
The map $\res$ on the left is an isomorphism.
\end{theorem}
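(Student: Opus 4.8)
The plan is to construct the commutative square by exhibiting both vertical maps as boundary maps in (residue) long exact sequences and checking that the horizontal regulator maps are compatible with these boundaries, then proving bijectivity of $\res$ on the left by a limit argument.

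First I would recall that the lower half of lecture VII identifies, via the Residue sequence (Proposition after 4.6 and 7.2) together with Lemma 7.3, the groups $\hm^{n+1}(\gm{\U}\rel Z^{(n)},n)^\sgn$ with the iterated cone of residue maps terminating in $\hm^{1}(\U,1)\isom\hm^0(B,0)\oplus\hm^0(B,0)$. Applying the total residue $\res$ of Lemma 8.3 to the \emph{first} residue and then composing with the projection to the summand indexed by the point $1$ gives the left vertical map. On the Hodge side, the short exact sequence of variations
\[
0\to\Q(n)_\U\to\shG^{(n)}\xrightarrow{\res_n}\shG^{(n-1)}\to 0
\]
from Proposition 4.9, 4.8 realizes $\habs^1(\U_\R,\shG^{(n)})=\Ext^1_{\Var(\U/\R)}(\Q(0),\shG^{(n)})$ as an extension of $\Ext^1(\Q(0),\shG^{(n-1)})$ by $\Ext^1(\Q(0),\Q(n)_\U)$; taking the inverse limit over $n$ produces $\habs^1(\U_\R,\Log_\U)$, and the composite of all the $\res_n$ followed by the residue at $1$ from lecture III (Lemma \ref{III.2.1}) gives the right vertical map. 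Since the Chern class morphism $r_\hodge$ is, by the last clause of the Residue-sequence Proposition, compatible with the residue sequences on both sides, the square commutes by functoriality of the boundary maps — each step in the tower commutes, and passing to the limit preserves commutativity.

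It then remains to show $\res$ on the left is an isomorphism. Here I would argue by induction along the residue sequences in motivic cohomology. The key input is that $\U$ is of mixed Tate type and its motivic cohomology is controlled by Borel's theorem, together with Lemma 7.3 which pins down $\hm^i(\A^n_\U\rel\bar Z^{(n)},n)\isom\hm^{i-n}(\U,j)$. Concretely, one checks that in the degree and twist $(n+1,n)$ relevant to us, the groups $\hm^{i}(\A^n_\U\rel\bar Z^{(n)},n)^\sgn$ contributing error terms in the Residue sequence vanish (for the relevant $i$), so that $\res_n$ is an isomorphism $\hm^{n+1}(\gm{\U}\rel Z^{(n)},n)^\sgn\silo\hm^{n}(\gm{\U}\rel Z^{(n-1)},n)^\sgn$ stably in $n$; the inverse limit of the tower then reduces to $\hm^1(\U,1)$, and the total residue of Lemma 8.3 is by construction a bijection onto $\hm^0(B,0)$. (Strictly, one should be slightly careful that the limit is taken after the $\sgn$-eigenspace projection and that the twist drops by one at each residue, so the relevant vanishing is of $\hm^{*}$ of a product of $\G_m$'s and $\A^1$'s over $\U$ in weights where Borel/Tate vanishing applies.)

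The main obstacle I expect is the bijectivity of $\res$: it is the one point that genuinely uses the \emph{specific} geometry of $\U=\Pe^1\setminus\{0,1,\infty\}$ rather than formal properties, and it requires knowing the precise vanishing ranges for the motivic cohomology of the auxiliary varieties $\A^n_\U\rel\bar Z^{(n)}$, i.e.\ a clean bookkeeping of degrees, Tate twists, and the $\sgn$-eigenspace decomposition through the whole residue tower. The commutativity of the square, by contrast, I expect to be essentially formal once the identifications of the two boundary maps with honest connecting homomorphisms are in place and one invokes the stated compatibility of $r_\hodge$ with residues; the only care needed there is the passage to the inverse limit over $n$, which is harmless because all the transition maps are the residue maps and the Hodge-side extensions assemble into $\Log_\U$ by definition.
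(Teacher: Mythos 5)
The paper does not actually write out a proof of this theorem; it is cited as \cite{HW} Corollary~8.8, and the surrounding text only assembles the ingredients (the Residue sequence, Lemma~7.3, Lemma~8.3, and the compatibility of $r_\hodge$ with the residue sequences). Your outline is consistent with the route those ingredients suggest and with the way \cite{HW} proceeds, so the approach is right in spirit. Two points are worth tightening.

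First, a small misreading of Lemma~8.3: the map called $\res$ there already lands in $\hm^0(B,0)$ --- the bijection in the lemma is the composite starting with $i_1$ (the inclusion of the summand indexed by the point $1$), and $\res$ is defined as its inverse. So there is no further ``projection to the summand indexed by $1$'' to apply; the left vertical arrow is simply the projective limit followed by $\res$ from Lemma~8.3, together with the observation that all the transition maps $\res_n$ for $n\ge 2$ are isomorphisms.

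Second, the one genuine gap, which you flag yourself, is the vanishing of the error terms that makes each $\res_n$ ($n\ge 2$) an isomorphism. This is not just bookkeeping to be ``expected'' --- it is the whole content of the bijectivity claim, and it is where the specific geometry of $\U$ enters. But it does follow directly from Lemma~7.3: $\hm^i(\A^n_\U\rel\bar Z^{(n)},n)\isom\hm^{i-n}(\U,\cdot)$, and for $i=n+1,n+2$ (the two terms flanking $\res_n$ in the residue sequence at degree $n+1$, twist $n$) this gives $\hm^1(\U,0)$ and $\hm^2(\U,0)$, both of which are zero because $\U$ is smooth affine of dimension~$1$ and $K_{-1}$, $K_{-2}$ of a regular ring vanish. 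Once that is written down, your induction closes and the projective limit collapses onto the $n=1$ term $\hm^2(\gm\U\rel Z,1)$, after which Lemma~8.3 finishes the bijectivity. The commutativity part of your argument --- matching the two boundary/residue towers via the Chern-class compatibility and passing to the limit --- is sound as stated.
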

We now define $\pol_{\Mh}$, the {\em motivic polylogarithm} simply as $\res^{-1}(1)$.
By construction, $r_\hodge\pol_{\Mh}=\pol$.

\subsection{The motivic splitting principle}
Let $d\geq 2$ and $b$ prime to $d$. Let $C=\spec \Z[T]/\Phi_d(T)[1/d]$. It embeds canonically into $\U$. It can be twisted by raising to the $b$-th power on $C$. Call the resulting embedding $i_b:C\to \U$. The morphism $[d+1]$ on
$\gm{C}$ (raising to the $d+1$-th power) respects $Z_C$. We can analyze
the eigenvalues of this operation and find:
\begin{prop}[\cite{HW} Lemma 9.3]
There is a natural splitting
\[\hm^{n+1}(\gm{C}\rel Z^{(n)}_C,n)=\prod_{1\leq i\leq n}\hm^1(C,i). \]
The splittings are compatible in the projective system and they are also
compatible with the splitting in absolute Hodge cohomology induced
by the splitting principle there.
\end{prop}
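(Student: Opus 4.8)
\medskip

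\noindent\textbf{Proof proposal.} The plan is to put an extra operator on the group $M_n:=\hm^{n+1}(\gm{C}\rel Z^{(n)}_C,n)^\sgn$, coming from the $(d+1)$-th power map on $\gm{C}$, and to diagonalize it. Let $\mu$ be the morphism of $\gm{C}^n$ raising to the $(d+1)$-th power in each $\G_m$-factor and the identity on the base $C$. Since $C=\spec\Z[T]/\Phi_d(T)[1/d]$ and $T^d=1$ we have $T^{d+1}=T$, so $\mu$ fixes both components of $Z_C$ --- the unit section and the tautological root-of-unity section --- and likewise $[d+1]$ on $\A^n_C$ fixes $\bar{Z}^{(n)}_C$. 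Being an honest morphism of schemes, $\mu$ thus induces an endomorphism $\psi$ of the whole residue sequence of \cite{HW} (after 4.6 and 7.2) over $C$, commuting with the transition maps $\res_n$ and with the Chern class maps to absolute Hodge cohomology. Following \cite{HW}, \S\S7--8 over $C$ in place of $\U$ --- using $\hm^1(C,0)=0$ (negative $K$-theory) and $\hm^2(C,j)=0$ (finiteness of the relevant $K$-groups of a ring of $S$-integers) --- the group $M_n$ carries a natural, $\psi$-stable finite filtration whose successive subquotients are $\hm^1(C,j)$, $1\le j\le n$, with $\hm^1(C,n)$ a subobject and $\res_n$ identifying the quotient of $M_n$ by it with $M_{n-1}$.

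\medskip

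The heart of the matter is the action of $\psi$ on these subquotients: I claim it acts on the one isomorphic to $\hm^1(C,j)$ by the \emph{scalar} $(d+1)^{j}$ (the exponent is $j$ up to the sign convention orienting $\G_m$; which variant occurs plays no role below). I would verify this after applying the absolute Hodge realization, where it is transparent: $[d+1]$ acts on the variation $\Log^{(1)}$ compatibly with its weight filtration, and since $[d+1]_\ast$ is multiplication by $d+1$ on $H_1(\G_m)=\Q(1)$, it scales the weight $-2j$ part of $\Log^{(n)}=\Sym^n\Log^{(1)}$ --- hence the corresponding piece of $\Ext^1_{\MHM(C_\R/\R)}(\Q(0),\Log^{(n)}_{C_\R})$, namely the one matched with $\hm^1(C,j)$ --- by the appropriate power of $d+1$. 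To transfer this to $M_n$ it suffices that the Chern class map be injective on that subquotient, i.e.\ that $\hm^1(C,j)=K_{2j-1}(\Q(\mu_d))_\Q\to\habs^1(C_\R/\R,\Q(j))$ be injective; this is the Dirichlet $S$-unit theorem for $j=1$ and Borel's theorem (plus Beilinson's comparison of regulators) for $j\ge 2$.

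\medskip

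Granting this, the conclusion is formal. As $d\ge 2$, the scalars $(d+1)^1,\dots,(d+1)^n$ are pairwise distinct, so a $\psi$-stable filtration of $M_n$ whose subquotients carry these distinct scalar $\psi$-actions splits canonically into the $\psi$-eigenspace decomposition --- for modules over $\Q[\psi]$, an extension with disjoint eigenvalue sets has vanishing $\Ext^1$. This gives $M_n=\bigoplus_{j=1}^n M_n^{(j)}$ with $M_n^{(j)}\silo\hm^1(C,j)$, the asserted natural splitting. Compatibility in the projective system follows from $\psi$-equivariance of $\res_n$: eigenvalue considerations force $\res_n$ to annihilate $M_n^{(n)}$ and to carry $M_n^{(j)}$ isomorphically onto $M_{n-1}^{(j)}$ for $j<n$. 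Compatibility with the Hodge splitting follows from $\psi$-equivariance of the Chern class maps, the splitting of \cite{HW} on the Hodge side --- the $C_\R$-avatar of the splitting principle $\omega^\ast\Log=\prod_{j\ge 0}\Q(j)$ --- being itself the $[d+1]^\ast$-eigenspace decomposition.

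\medskip

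\noindent\textbf{The main obstacle} is the eigenvalue computation of the second step: showing $[d+1]^\ast$ acts on each subquotient by a genuine \emph{scalar}, and not merely with $(d+1)^{\pm j}$ among its eigenvalues. This forces one to check that the residue-sequence identifications of \cite{HW}, \S7 --- the isomorphisms $\hm^i(\A^n_C\rel\bar{Z}^{(n)}_C,n)\isom\hm^{i-n}(C,\cdot)$ and, above all, the $\sym_n$-sign-eigenspace projector that pervades that construction --- are natural with respect to $\mu$ and interact with $[d+1]^\ast$ so as to produce precisely the $j$-th power. Settling the orientation convention that pins the exponent down is a further minor nuisance. Everything else --- that a group endomorphism preserving $Z_C$ induces compatible self-maps of the localization triangles and of the Chern class morphisms, and that a filtration carrying distinct scalar actions of an operator on its subquotients splits into eigenspaces --- is routine.
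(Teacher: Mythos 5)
Your proof takes exactly the approach the paper indicates just before the statement: use the endomorphism $\psi=[d+1]^*$ --- well-defined on the whole residue sequence over $C$ because $T^{d+1}=T$, so $Z_C$ and $\bar{Z}^{(n)}_C$ are stable --- and split $M_n:=\hm^{n+1}(\gm{C}\rel Z^{(n)}_C,n)^\sgn$ into its eigenspaces. The filtration coming from iterated residues with graded pieces $\hm^1(C,j)$, the scalar $(d+1)^j$ on the $j$-th piece, the distinctness of these scalars for $d\ge 2$, and the $\psi$-equivariance of $\res_n$ and of the Chern class maps are all the ingredients the paper has in mind when it says ``we can analyze the eigenvalues of this operation.'' The one place you take a slight detour is the eigenvalue computation itself: you establish that $\psi$ acts as a genuine scalar on each motivic subquotient by pushing through the absolute Hodge regulator and invoking its injectivity on $\hm^1(C,j)$ (Dirichlet for $j=1$, Borel plus Beilinson's comparison of regulators for $j\ge 2$), rather than tracking $[d+1]^*$ directly through the purity and homotopy-invariance identifications $\hm^i(\A^n_C\rel\bar{Z}^{(n)}_C,n)\cong\hm^{i-n}(C,\cdot)$ inside motivic cohomology. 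Since Borel's theorem is already essential elsewhere in the argument (Lemma 8.3, Theorem 8.8), this detour costs nothing, and it cleanly sidesteps the sign-bookkeeping you flag as the main nuisance. Your observation that any endomorphism of a split Tate object in $\MHS$ is automatically diagonal, because $\Hom_{\MHS}(\Q(i),\Q(j))=0$ for $i\neq j$, is exactly what identifies the canonical Hodge-theoretic splitting with the $\psi$-eigenspace decomposition and hence delivers the compatibility statement.
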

We can now prove our main theorem from lecture I:
\begin{defn}
Let 
\begin{align*}
 \epsilon:\{\text{primitive $d$-th roots of unity}\}&\to\hm^1(C,n)\\
    T^b&\mapsto (-1)^{n-1}\frac{1}{n!}\ \text{$n$-component of }i_b^*\pol_{\Mh}.
\end{align*}
\end{defn}
Clearly $r_\hodge\epsilon(\omega)=(-1)^{n-1}\frac{1}{n!}\ \text{$n$-component of }\omega^*\pol$ whose explicit value was computed in lecture V.

\section{Zagier's conjecture}
This talk was devoted to the presentation of the main ideas of the article \cite{BD}.\\

The {\it weak version of Zagier's conjecture}, meanwhile a
theorem of de Jeu's (\cite{dJ}) concerns itself with the $K$-theory of 
number fields. There is a conjecture for any integer $j \ge 1$, and the 
$j$-th can only be formulated if the preceeding ones are true.\\

Fix a number field  $F$. One wants to construct a $\Q$-vector space $\Lh_j$,
a map
\[
\{\;\;\}_j: \U (F) = F^* \setminus \{1\} \longrightarrow \Lh_j \; ,
\]
a homomorphism
\[
d_j: \Lh_j \longrightarrow 
\bigwedge^2 \left( \bigoplus_{l=1}^{j-1} \Lh_l \right) \; ,
\]
and a monomorphism
\[
\varphi_j: \ker(d_j) \hookrightarrow K_{2j-1} (F)_\Q \; .
\]
For $j=1$, one defines $\Lh_1 := F^* \otimes_{\Z} \Q$,
\[
\{x\}_1 := (1-x) \otimes 1 \in \Lh_1 \; ,
\]
$d_1 := 0$, and $\varphi_1$ as the isomorphism between $F^* \otimes_{\Z} \Q$
and $K_1(F)_\Q$.\\

For $j \ge 2$, let $\widetilde{\Lh}_j$ be the free $\Q$-vector space in the
symbols $\widetilde{ \{x\} }_j$, $x \in F^* \setminus \{1\}$. Define
\[
\widetilde{d}_j: \widetilde{\Lh}_j \longrightarrow
\Lh_{j-1} \otimes_{\Q} \Lh_1 \longrightarrow
\bigwedge^2 \left( \bigoplus_{l=1}^{j-1} \Lh_l \right)
\]
by sending the symbol $\widetilde{ \{x\} }_j$ to $\{x\}_{j-1} \wedge x$.\\

The conjecture predicts a map
\[
\widetilde{\varphi}_j: \ker(\widetilde{d}_j) \longrightarrow
K_{2j-1} (F)_\Q \; .
\]
It also predicts the explicit shape of the composition
\[
r_{\Hh} \verk \widetilde{\varphi}_j: \ker(\widetilde{d}_j) \longrightarrow
\left( \bigoplus_{\sigma: F \hookrightarrow \C} 
\C / (2 \pi i)^j \R \right)^+\; .
\]
If the conjecture holds, one sets
\[
\Lh_j := \widetilde{\Lh}_j / \ker(\widetilde{\varphi}_j) \; .
\]
In the talk, it was explained, following \cite{BD}, section 2, that the conjecture follows from the motivic folklore, explained in lecture I, plus
the existence of
``$\pol \in \Mh\Mh^s(\U)$''.
More precisely, if $S \in \ker(\widetilde{d}_j)$, then $\widetilde{\varphi}_j (S)$ is obtained by a linear combination of ``coefficients'' of the value of
$\pol$ at points $x \in \U (F)$.\\

The material covered in lectures II-VII can be seen as a description of
$\widetilde{\varphi}_j$ on a very particular kind of elements of $\ker(\widetilde{d}_j)$, namely those symbols concentrated on roots of unity.


\end{document}